\newtheorem{theorem}{Theorem}[section]
\newtheorem{cor}{Corollary}[section]
\newtheorem{lemma}[theorem]{Lemma}
\newtheorem{defi}[theorem]{Definition}
\newtheorem*{tha}{Theorem A}
\newtheorem*{th1}{Theorem I}
\newtheorem*{th2}{Theorem II}
\begin{document}

%
%
%
%
%
%
%
%
%

\title[Unicity problem on complete K\"ahler manifolds]
 {Unicity problem on meromorphic mappings of complete K\"ahler manifolds}


\author[Xianjing Dong]{Xianjing Dong}

\address{%
School of Mathematical Sciences\\
Qufu Normal University\\
Qufu 273165\\
Shandong\\
People's Republic of China}

\email{xjdong05@126.com}

\author[Mengyue Liu]{Mengyue Liu$^*$}

\address{%
School of Mathematical Sciences\\
Qufu Normal University\\
Qufu 273165\\
Shandong\\
People's Republic of China}

\email{mengyueliumath@163.com}

\thanks{The research work is supported by the Natural Science Foundation of Shandong Province of China
 (ZR202211290346).}
\subjclass{32H30; 30D35}

\keywords{Nevanlinna theory; unicity theorems; five-value theorem; K\"ahler manifolds.}

\date{January 1, 2004}

\begin{abstract}
Nevanlinna's unicity theorems have always held an important position in value distribution theory. The  main purpose  of this paper is to 
generalize the classical Nevanlinna's unicity theorems to  non-compact complete K\"ahler manifolds with non-positive sectional curvature or nonnegative Ricci curvature.
 \end{abstract}

\maketitle

\section{Introduction}

\quad\quad
Nevanlinna's five-value theorem (see, e.g., \cite{Hu, Hay, Yang}) is well-known as a famous theorem in value distribution theory stated as follows.
\begin{tha}[Nevanlinna]
Let $f_1, f_2$ be two nonconstant meromorphic functions on $\mathbb C.$ Let $a_1,\cdots,a_5$ be five distinct values in $\overline{\mathbb C}.$
If $f_1, f_2$ share $a_j$ ignoring multiplicities for $j=1,\cdots,5,$ then $f_1\equiv f_2.$
\end{tha}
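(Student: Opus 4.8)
The plan is to argue by contradiction, with the truncated Second Main Theorem as the engine. Assume $f_1,f_2$ share $a_1,\dots,a_5$ ignoring multiplicities but $f_1\not\equiv f_2$. First I would normalize the targets: a Möbius transformation $L$ of $\overline{\mathbb C}$ whose pole avoids $\{a_1,\dots,a_5\}$ sends nonconstant meromorphic functions to nonconstant ones, preserves "sharing ignoring multiplicities", satisfies $L\circ f_1\equiv L\circ f_2\iff f_1\equiv f_2$, and alters $T(r,\cdot)$ only by $O(1)$; replacing $f_i$ by $L\circ f_i$ and $a_j$ by $L(a_j)$, I may therefore assume $a_1,\dots,a_5\in\mathbb C$.

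Next I would exploit the shared values through the auxiliary function $g:=f_1-f_2\not\equiv 0$. Since the $a_j$ are distinct, the sets $\{f_1=a_j\}$ are pairwise disjoint, and by the sharing hypothesis $\{f_1=a_j\}=\{f_2=a_j\}$ as sets; at each such point both $f_1$ and $f_2$ take the finite value $a_j$, so $g$ vanishes there. Hence, using the First Main Theorem and $T(r,f_1-f_2)\le T(r,f_1)+T(r,f_2)+O(1)$,
\[
\sum_{j=1}^{5}\overline N(r,a_j,f_1)=\sum_{j=1}^{5}\overline N(r,a_j,f_2)\le N\!\left(r,\tfrac1g\right)\le T(r,g)+O(1)\le T(r,f_1)+T(r,f_2)+O(1).
\]

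Now I would apply the Second Main Theorem with truncated counting functions to $f_1$ and to $f_2$, each time with the five targets $a_1,\dots,a_5$, so that the coefficient is $q-2=3$:
\[
3\,T(r,f_i)\le\sum_{j=1}^{5}\overline N(r,a_j,f_i)+S(r,f_i),\qquad i=1,2,
\]
where $S(r,f_i)=o\big(T(r,f_i)\big)$ as $r\to\infty$ outside a set of finite Lebesgue measure. Substituting the bound above and adding the two inequalities gives $3\big(T(r,f_1)+T(r,f_2)\big)\le 2\big(T(r,f_1)+T(r,f_2)\big)+S(r,f_1)+S(r,f_2)$, that is,
\[
T(r,f_1)+T(r,f_2)\le S(r,f_1)+S(r,f_2).
\]
Since $f_1,f_2$ are nonconstant, $T(r,f_i)\to\infty$, so this fails for all large $r$ outside the exceptional set — a contradiction. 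Therefore $f_1\equiv f_2$.

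A word on the difficulty: everything above is routine bookkeeping once the First and Second Main Theorems are in hand, so the real content, and the step I expect to be the main obstacle, is the Second Main Theorem itself with the sharp coefficient $q-2$ and a genuinely negligible error term $S(r,f)$. On $\mathbb C$ this is Nevanlinna's theorem; in the intended generalization to non-compact complete K\"ahler manifolds it is precisely here that the curvature hypotheses (non-positive sectional curvature, or nonnegative Ricci curvature) must enter, in order to control the error coming from the logarithmic-derivative / Ricci-form contribution and to secure the "$o(T(r,f))$ outside a small set" conclusion needed to close the argument.
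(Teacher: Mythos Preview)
Your argument is correct; it is the standard direct proof of Nevanlinna's five-value theorem. The paper does not give a standalone proof of Theorem~A (it is cited as classical), but its proof of the generalization Theorem~I specializes to the case $M=\mathbb C$, and that proof takes a genuinely different route. Rather than forming $g=f_1-f_2$ and bounding $\sum_j\overline N(r,a_j,f_i)$ by $T(r,g)$, the paper runs the ``propagation of algebraic dependence'' machinery of Section~3: one embeds $\mathbb P^1(\mathbb C)\hookrightarrow\mathbb P^N(\mathbb C)$, lets $\Sigma$ be the pullback of the diagonal in $\mathbb P^N\times\mathbb P^N$, notes that $\tilde f=f_1\times f_2$ maps $S=\bigcup_j S_j$ into $\Sigma$, and then checks that $L_0\otimes K_{\mathbb P^1}=(q-4)\mathscr O(1)$ is big for $q\ge5$, so that the propagation theorem forces $\tilde f(M)\subseteq\Sigma$, i.e.\ $f_1\equiv f_2$. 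Both arguments ultimately rest on the Second Main Theorem with truncated counting functions---your closing paragraph correctly identifies this as the real content---so the engines are the same; what differs is the packaging. The paper's route buys uniformity (the same line-bundle bookkeeping handles arbitrary projective targets $X$ and higher-dimensional domains $M$), while yours is shorter and fully elementary for the $\mathbb P^1$ target.
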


Theorem A  was generalized    by many authors. For instance,  C.-C.  Yang \cite{Yang} weakened the condition for sharing
five values to ``partially" sharing five values;
Li-Qiao \cite{Qiao} extended    it to five small functions by replacing five values; G. Valiron \cite{Vali}  generalized  it to algebroid functions;
W. Stoll \cite{Sto} considered  some related problems  when domains are parabolic manifolds; Y. Aihara \cite{A1, A3,Ai} extended it to  meromorphic functions on a finite ramified  covering space of $\mathbb C^m;$ etc.  
We refer the reader to more  related literature such as
Dulock-Ru \cite{Ru},
 H. Fujimoto \cite{Fji},  S. Ji \cite{ji, Ji} and M. Ru \cite{ru}, etc.
In addition, 5$IM$ problem  also generalizes  to $3CM+1IM$ and $2CM+2IM$ problems,  refer to   Hu-Li-Yang \cite{Hu} and Yang-Yi \cite{Yang}, etc.

     There seem to be few results  regarding   the unicity problem   on a general complex manifold. For example, is the   five-value theorem true on a complete K\"ahler manifolds with non-negative Ricci curvature (refer to examples of such manifolds in \cite{S-Y, T-Y1, T-Y2})?
  In this paper, 
 we shall consider two types of K\"ahler manifolds, which have  non-positive sectional curvature or  non-negative Ricci curvature.  
  The main trick  is
to employ   Carlson-Griffiths theory  developed by the first author \cite{Dong, Dong2}. 

 Let $M$ be a   non-compact complete K\"ahler manifold.  For    a meromorphic function $f$ on $M,$ we have the characteristic function $T_f(r, \omega_{FS})$ of $f$ under two kinds of  curvature conditions (see  the definition in Section 2).
Let $\kappa(r)$ be the Ricci term defined by (\ref{ka}) in Section 2. 

 The first main result is  as follows. 

\begin{th1}[=Theorem \ref{im1}]\label{dde3}
 Let $f_1, f_2$ be two nonconstant meromorphic functions on $M.$ Let $a_1,\cdots,a_5$ be five distinct values in $\overline{\mathbb C}.$ If $f_1, f_2$ share $a_j$ ignoring multiplicities for $j=1,\cdots,5,$ then each of the following conditions ensures that  $f_1\equiv f_2${\rm:}

$(a)$ 
$M$ has non-negative Ricci curvature and carries a positive global Green function{\rm;}  
 
 $(b)$ 
 $M$ has non-positive sectional curvature and 
 $f_1, f_2$  satisfy the growth condition
$$ \liminf_{r\rightarrow\infty}\frac{\kappa(r)r^2}{T_{f_j}(r, \omega_{FS})}=0, \ \ \ j=1, 2.$$
\end{th1}

Assume that $M$ has non-negative Ricci curvature.  Note that  $M$ carries a  positive global Green function if and only if 
$M$ satisfies the volume growth condition
$$\int_1^\infty\frac{t}{V(x, t)}dt<\infty$$
for some  $x\in M,$ where $V(x, r)$ denotes the Riemannian volume of a geodesic ball centered at $x$ with radius $r$ in $M.$  In particular,  a  positive global Green function exists 
 if $M$ has  maximal volume growth, i.e., 
$$\liminf_{r\to\infty}\frac{V(x, r)}{r^{\dim_{\mathbb R}M}}>0.$$

Let $D=\sum_{j}\mu_{j}D_{j}$ be an effective divisor,  where $D_{j}^,s$ are prime divisors. Set 
$$\mu_{D}= \inf_{j}\big\{\mu_{j}\big\}.$$
We obtain  the following result.   

\begin{th2}[=Theorem \ref{cm1}]\label{123}
 Let $f_1, f_2$ be two nonconstant meromorphic functions on $M.$ Let $a_1,\cdots,a_q$ be distinct values in $\overline{\mathbb C}.$ Let $\beta>0$ be an integer such that $q>2\beta^{-1}+2.$  If 
 $f_{1}^{*}a_{j}=f_{2}^{*}a_{j}$ with $\mu_{f_{i}^{*}a_{j}}\ge \beta$ for $i=1,2$ and $j=1,\cdots,q,$   then  each of the following conditions ensures that $f_{1}\equiv f_{2}${\rm:}

$(a)$ 
 $M$ has non-negative Ricci curvature and carries a positive global Green function{\rm;} 
 
 $(b)$ 
 $M$ has non-positive sectional curvature and
 $f_1, f_2$  satisfy the growth condition
$$ \liminf_{r\rightarrow\infty}\frac{\kappa(r)r^2}{T_{f_j}(r, \omega_{FS})}=0, \ \ \ j=1, 2.$$
\end{th2}

\section{Carlson-Griffiths theory}

\quad\quad
 Let $(M,g)$ be a $m$-dimensional  non-compact complete K\"ahler manifold  with K\"ahler form 
$$\alpha=\frac{\sqrt{-1}}{\pi}\sum_{i,j=1}^mg_{i\bar{j}}dz_i\wedge d\bar{z}_j$$
in a local holomorphic coordinate $(z_1,\cdots,z_m).$ 
 Let $X$ be a complex projective manifold of complex dimension not greater  than  $m.$  Put a  positive Hermitian line bunlde $(L, h)$ over $X$  such that the Chern form
$c_1(L,h)=-dd^c\log h>0,$ where 
\begin{equation*}
d=\partial+\overline{\partial}, \ \ \  d^{c}=\frac{\sqrt{-1}}{4\pi}(\overline{\partial}-\partial) \ \  \  \text{so that} \ \ \  dd^{c}=\frac{\sqrt{-1}}{2\pi}\partial\overline{\partial}.
\end{equation*}
 Let  $s_D$ be the   section  associated to $D\in|L|,$ namely, a holomorphic section of $L$ over $X$ with zero divisor  $D.$  
 Let 
 $$K_X=\bigwedge^m T^*X$$ 
 be the canonical line bundle over $X,$  where $T^*X$ is the holomorphic cotangent bundle over $X.$
Fix a reference point $o\in M.$ 
Let $f:M\to X$ be a meromorphic mapping.
In the following, we shall  define Nevanlinna's functions with  different curvature conditions. 

$1^\circ$  \emph{$M$ has non-negative Ricci curvature}

Assume that $M$ carries a positive global Green function, i.e., it satisfies the volume growth condition:
$$\int_1^\infty\frac{t}{V(t)}dt<\infty,$$
where $V(r)$ is the Riemannian volume of the geodesic ball centered at $o$ with radius $r.$ Then,  there exists a unique minimal positive global Green function $G(o, x)$ for $M$ satisfying
$$-\frac{1}{2}\Delta G(o,x)=\delta_o(x),$$
where $\Delta$ denotes the Laplace-Beltrami operator and $\delta_o$ is the Dirac function with a pole at $o.$ 
This implies that there exist constants $B>A>0$  such that 
$$A\int_{\rho(x)}^\infty\frac{t}{V(t)}dt\le G(o,x)\le B\int_{\rho(x)}^\infty\frac{t}{V(t)}dt, \ \ \ ^\forall x\in M,$$
where $\rho(x)$ is the Riemannian distance function of $x$ from $o$ (see Li-Yau\cite{LY}).
Define $$\Delta(r)=\left\{x\in M: \ G(o,x)>A\int_r^\infty \frac{t}{V(t)}dt\right\}, \ \ \ ^\forall r>0.$$
It is clearly that $\Delta(r)$ is relatively compact for all $r>0,$ and the sequence $\{\Delta(r_n)\}_{n=1}^\infty$ exhausts $M$ if $$0<r_1<r_2<\cdots<r_n<\cdots\to \infty.$$
In further, the Sard's theorem implies that the boundary $\partial\Delta(r)$ of $\Delta(r)$ is a submanifold of $M$ for almost all $r>0.$
Set $$g_r(o, x)=G(o, x)-A\int_r^\infty \frac{t}{V(t)}dt,$$
which defines the Green function of $\Delta/2$ for $\Delta(r)$ with a pole at $o$ satisfying Diricheler boundary condition, i.e.,
$$-\frac{1}{2}\Delta g_r(o,x)=\delta_o(x), \ \ \ ^\forall x\in\Delta(r); \ \ \ g_r(o,x)=0, \ \ \ ^\forall x\in\partial\Delta(r).$$
Furthermore, $g_{r}(o, x)$ defines the harmonic measure $\pi_r$ on $\partial \Delta(r),$ i.e.,
      \begin{equation*}\label{Har}
d\pi_{r}(x)=\frac{1}{2}\frac{\partial g_{r}(o, x)}{\partial \vec\nu}d\sigma_{r}(x), \ \ \  ^\forall x\in\partial \Delta(r),
\end{equation*}
where  $\partial/\partial \vec\nu$ is the inward  normal derivative on $\partial \Delta(r),$   $d\sigma_r$ is the Riemannian area element of $\partial \Delta(r).$

The Nevanlinna's functions (\emph{characteristic function}, \emph{proximity function} and \emph{counting function} as well as  \emph{simple counting function}) are   respectively defined by
  \begin{align*}
T_f(r, L)&=-\frac{1}{4}\int_{\Delta(r)}g_r(o,x)\Delta \log(h\circ f)dv, \\
m_f(r,D)&=\int_{\partial \Delta(r)}\log\frac{1}{\|s_D\circ f\|}d\pi_r, \\
N_f(r,D)&=\frac{\pi^m}{(m-1)!}\int_{f^*D\cap\Delta(r)}g_r(o,x)\alpha^{m-1}, \\
\overline{N}_f(r, D)&=\frac{\pi^m}{(m-1)!}\int_{{\rm Supp} (f^*D)\cap\Delta(r)}g_r(o,x)\alpha^{m-1},
 \end{align*}
where  $dv$ is the Riemannian volume element of $M.$

 Recently, the first author \cite{Dong2} obtained the following first main theorem and second main theorem.
\begin{theorem}[Dong, \cite{Dong2}]\label{first2}
Let $f: M\to X$ be a meromorphic mapping such that  $f(o)\not\in {\rm{Supp}}D.$ Then
$$m_f(r,D)+N_f(r,D)=T_f(r,L)+O(1).$$
\end{theorem}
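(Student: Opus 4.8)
The plan is to reduce the identity, exactly as in the classical derivation of the first main theorem on $\mathbb{C}^m$ (balls and the Fubini--Study proximity form), to a Green--Jensen formula on the exhausting domains $\Delta(r)$ — now playing the role of balls — into which one feeds the Poincaré--Lelong equation for $\log\|s_D\circ f\|^2$. First I would record the \emph{Green--Jensen formula}: for every $r>0$ for which $\partial\Delta(r)$ is a smooth hypersurface (almost every $r$, by Sard's theorem) and every $u$ that is $\mathscr C^2$ near $\overline{\Delta(r)}$,
$$\int_{\partial\Delta(r)}u\,d\pi_r=u(o)+\frac12\int_{\Delta(r)}g_r(o,x)\,\Delta u(x)\,dv(x).$$
This is Green's second identity applied to $u$ and $g_r(o,\cdot)$ on $\Delta(r)$: the term $\int_{\Delta(r)}(\Delta g_r)u\,dv$ gives $-2u(o)$ since $-\tfrac12\Delta g_r(o,\cdot)=\delta_o$; the boundary term carrying the factor $g_r$ vanishes since $g_r(o,\cdot)|_{\partial\Delta(r)}=0$; and the remaining boundary term is $2\int_{\partial\Delta(r)}u\,d\pi_r$ by the definition $d\pi_r=\tfrac12(\partial g_r/\partial\vec\nu)\,d\sigma_r$ of the harmonic measure. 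Taking $u\equiv1$ recovers $\pi_r(\partial\Delta(r))=1$.

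I would then apply this with $u=\log\|s_D\circ f\|^2$, which locally equals $\log|\tilde s_D\circ f|^2+\log(h\circ f)$ for $\tilde s_D$ a holomorphic local representative of $s_D$. Since $f(o)\notin\mathrm{Supp}\,D$, $u$ is smooth near $o$ and $u(o)=2\log\|s_D\circ f(o)\|$ is a finite constant (nonzero and finite because $X$ is compact and $f(o)\notin\mathrm{Supp}\,D$); this is the eventual $O(1)$. Off $\mathrm{Supp}(f^*D)$ and off the indeterminacy locus $I(f)$ — the latter of complex codimension $\ge2$ and hence negligible in all integrals — the summand $\log|\tilde s_D\circ f|^2$ is pluriharmonic, so the absolutely continuous part of $\Delta u$ is $\Delta\log(h\circ f)$, while by the Poincaré--Lelong formula the singular part of the current $dd^c u$ is the integration current $[f^*D]$ taken with multiplicities. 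Converting the $\Delta(\cdot)\,dv$ integrals into integrals of $dd^c(\cdot)\wedge\alpha^{m-1}/(m-1)!$ by the Kähler identity — the factors $\pi^m/(m-1)!$ in $N_f,\overline N_f$ and $\tfrac14$ in $T_f$ are calibrated exactly so that this conversion is coefficient-free — the Green--Jensen identity becomes
$$m_f(r,D)+N_f(r,D)=T_f(r,L)-\log\|s_D\circ f(o)\|,$$
and the last term is a fixed constant, which is the assertion.

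The main obstacle is the rigorous execution of this last step: the smooth Green--Jensen formula cannot be applied directly to $u=\log\|s_D\circ f\|^2$ because of its logarithmic poles along $f^*D$, so one regularizes — apply the smooth formula to $\log(\|s_D\circ f\|^2+\varepsilon)$, let $\varepsilon\to0$, and identify the limit of the singular contribution with $N_f(r,D)$ via the local structure of $f^*D$ as a divisor and the coarea formula — while simultaneously justifying the almost-everywhere smoothness of $\partial\Delta(r)$, the integrability of $g_r(o,\cdot)\,\Delta\log(h\circ f)$ near the pole $o$ and near $I(f)$, and the harmlessness of $I(f)$ (using $\mathrm{codim}_{\mathbb C}I(f)\ge2$ and that a meromorphic $f$ extends across it, so that $[f^*D]$ and $f^*c_1(L,h)$ are well-defined closed currents). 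These verifications are of the same nature as, and are supplied by, the Carlson--Griffiths-type framework built in \cite{Dong2}.
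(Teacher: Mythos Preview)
The paper does not supply a proof of this statement: Theorem~\ref{first2} is quoted from the reference \cite{Dong2} and is used as a black box throughout. So there is no ``paper's own proof'' to compare against here.

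That said, your sketch is the standard and correct derivation of a first main theorem in this setting, and it is exactly the kind of argument one expects in \cite{Dong2}: a Green--Jensen identity on the exhausting domains $\Delta(r)$ (obtained from Green's second identity for $g_r(o,\cdot)$, using $-\tfrac12\Delta g_r=\delta_o$, the Dirichlet boundary condition, and the definition of $d\pi_r$), fed with $u=\log\|s_D\circ f\|^2$, combined with the Poincar\'e--Lelong equation to separate the absolutely continuous part $\Delta\log(h\circ f)$ from the divisorial part $[f^*D]$, and with the K\"ahler identity converting $\Delta u\,dv$ into $dd^cu\wedge\alpha^{m-1}$ so that the normalizing constants in the definitions of $T_f$, $N_f$, $m_f$ line up. Your handling of the technical points (regularization $\log(\|s_D\circ f\|^2+\varepsilon)$, Sard's theorem for $\partial\Delta(r)$, codimension-two indeterminacy) is also the standard route and presents no gap at the level of a proof outline.
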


\begin{theorem}[Dong, \cite{Dong2}]\label{second2}
Let $f:M\rightarrow X$ be a differentiably non-degenerate meromorphic mapping 
and let $D\in|L|$ be a reduced divisor of simple normal crossing type. Then  for any $\delta>0,$ there exists a subset $E_\delta\subseteq(0, \infty)$ of finite Lebesgue measure such that
 $$T_f(r,L)+T_f(r, K_X)
\leq  \overline N_f(r,D)+O\left(\log^+T_f(r,L)+\delta\log r\right)$$
holds for all $r>0$ outside $E_\delta.$
\end{theorem}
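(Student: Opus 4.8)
The plan is to follow the Carlson--Griffiths method, transplanted to the Green-function exhaustion $\{\Delta(r)\}_{r>0}$ of $M$. Its three ingredients are a singular volume form on $X$ with logarithmic poles along $D$, the generalized Jensen formula attached to the Green function $g_r(o,x)$, and a Borel-type growth (calculus) lemma that produces the error term $O(\log^{+}T_f(r,L)+\delta\log r)$.

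\emph{Step 1 (singular volume form).} Write $D=D_1+\cdots+D_q$ with the $D_j$ smooth and crossing normally; this uses that $D$ is reduced of simple normal crossing type. Fix a smooth positive volume form $\Omega$ on $X$ and, after rescaling $h$ so that $\|s_{D_j}\|^{2}\le e^{-1}$, form the Carlson--Griffiths volume form
$$\Psi=\frac{\Omega}{\prod_{j=1}^{q}\|s_{D_j}\|^{2}\bigl(\log\|s_{D_j}\|^{2}\bigr)^{2}},$$
which has finite mass on $X$ (Poincar\'e growth along $D$) and is a singular Hermitian metric whose logarithmic poles along $D$ make its Ricci current simultaneously encode the curvatures of $K_X$ and $L=[D]$ and the current of integration along $D$. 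Let $\xi\ge0$ be the density of $f^{*}\Psi\wedge\alpha^{m-\dim_{\mathbb C}X}$ with respect to $dv$, i.e.\ the Jacobian of $f$ measured in $\Psi$; differentiable non-degeneracy of $f$ ensures $\xi\not\equiv0$, so $\log\xi$ is defined almost everywhere.

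\emph{Step 2 (Jensen formula and the truncated counting).} The generalized Jensen identity for the exhaustion $\Delta(r)$ reads
$$\int_{\partial\Delta(r)}u\,d\pi_r-u(o)=\frac12\int_{\Delta(r)}g_r(o,x)\,\Delta u\,dv,$$
and is applied to $u=\log\xi$, whose distributional Laplacian carries the smooth curvature $f^{*}\mathrm{Ric}\,\Psi$ together with the currents of integration along $f^{-1}(D)$ and along the ramification locus of $f$. Reading off the smooth part through the First Main Theorem (Theorem \ref{first2}) applied to $K_X$ and to $L$, and observing that the logarithmic order of the poles of $\Psi$ along $D$, together with the ramification counting of $f$, replaces the full counting by the truncated counting $\overline N_f(r,D)$, one arrives at
$$T_f(r,K_X)+T_f(r,L)\le \overline N_f(r,D)+\frac12\int_{\partial\Delta(r)}\log\xi\,d\pi_r+O(1).$$
Thus the whole problem is reduced to bounding the boundary mean of $\log\xi$ from above.

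\emph{Step 3 (boundary estimate --- the main obstacle).} Since $\pi_r$ is a probability measure on $\partial\Delta(r)$, concavity of the logarithm gives
$$\int_{\partial\Delta(r)}\log\xi\,d\pi_r\le\log\int_{\partial\Delta(r)}\xi\,d\pi_r,$$
and the Ahlfors--Schwarz inequality $(\mathrm{Ric}\,\Psi)^{\dim_{\mathbb C}X}\ge c\,\Psi$, which the Poincar\'e factors guarantee for some $c>0$, provides the pointwise comparison that bounds $\int_{\partial\Delta(r)}\xi\,d\pi_r$ by the $r$-derivative of an increasing quantity whose growth is governed by $T_f(r,L)$. The crux is then a Borel-type calculus lemma adapted to the harmonic-measure exhaustion: for every $\delta>0$ there is a set $E_\delta\subseteq(0,\infty)$ of finite Lebesgue measure outside of which
$$\log\int_{\partial\Delta(r)}\xi\,d\pi_r\le O\bigl(\log^{+}T_f(r,L)+\delta\log r\bigr).$$
Establishing this last estimate is the principal difficulty, for it requires controlling the interplay between the harmonic measure $d\pi_r$, the geometry of the level sets $\partial\Delta(r)$, and the volume growth of $M$; it is exactly here that the standing hypotheses --- non-negative Ricci curvature together with the existence of the positive global Green function, entering through the two-sided comparison of $G(o,x)$ with $\int_{\rho(x)}^{\infty}t/V(t)\,dt$ recalled in Section 2 --- are invoked to run the underlying iteration. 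Substituting this bound into Step 2 yields the asserted inequality.
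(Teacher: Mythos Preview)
The paper does not actually prove this theorem: it is quoted verbatim from \cite{Dong2} as a tool (see the line ``Recently, the first author \cite{Dong2} obtained the following first main theorem and second main theorem'' preceding Theorems~\ref{first2} and~\ref{second2}), and no argument is supplied in the present paper. Consequently there is no in-paper proof to compare your proposal against.

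That said, your outline is the standard Carlson--Griffiths scheme and is presumably the strategy of \cite{Dong2}. Two comments. First, be aware that you have written only a sketch, not a proof: Step~3 is where all the work lies, and you have essentially deferred it by saying ``it is exactly here that the standing hypotheses \ldots\ are invoked to run the underlying iteration.'' Making the Borel-type calculus lemma precise for the exhaustion $\{\Delta(r)\}$ requires relating $d\pi_r$ and the level-set geometry to the radial parameter $r$ via the Li--Yau two-sided Green-function estimate, and then differentiating the integrated quantity with respect to $r$; none of this is written out. Second, in Step~2 you should be careful that the ramification divisor contribution is \emph{non-negative} and can therefore be discarded on the right-hand side, and that the logarithmic singularities of $\Psi$ along $D$ give precisely the truncation to multiplicity one; as written, the sentence ``replaces the full counting by the truncated counting'' asserts the conclusion without indicating the mechanism. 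These are the points a referee of \cite{Dong2} would press on, but they are outside the scope of the present paper, which simply imports the result.
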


$2^\circ$   \emph{$M$ has non-positive sectional curvature.} 

Denote by $B(r)$  the geodesic ball in $M$ centered at $o$ with radius $r$ and by $\partial B(r)$ the geodesic sphere  centered at $o$ with radius $r.$ Then, it follows from Sard's theorem that $\partial B(r)$ is a submanifold of $M$ for almost any $r>0.$
In further, denote by $g_r(o,x)$  the positive Green function of $\Delta/2$  for $B(r),$ with a pole at $o$  satisfying  Dirichlet boundary condition.  Then, $g_r(o,x)$ defines  the harmonic measure  $\pi_r$ on  $\partial B(r)$ with respect to $o.$ 
 Let $Ric$  be the Ricci curvature tensor  of $M.$
Set
\begin{equation}\label{ka}
  \kappa(r)=\frac{1}{2m-1}\inf_{x\in B(r)}\mathcal{R}(x),
\end{equation}
where $\mathcal{R}$ is the
pointwise lower bound of Ricci curvature defined by
$$\mathcal{R}(x)=\inf_{\xi\in T_{x}M, \ \|\xi\|=1} Ric(\xi,\bar{\xi}).$$

 The Nevanlinna's functions (\emph{characteristic function}, \emph{proximity function} and \emph{counting function} as well as  \emph{simple counting function}) are   defined by
  \begin{align*}
T_f(r, L)&=-\frac{1}{4}\int_{B(r)}g_r(o,x)\Delta \log(h\circ f)dv, \\
m_f(r,D)&=\int_{\partial B(r)}\log\frac{1}{\|s_D\circ f\|}d\pi_r, \\
N_f(r,D)&= \frac{\pi^m}{(m-1)!}\int_{f^*D\cap B(r)}g_r(o,x)\alpha^{m-1}, \\
\overline{N}_f(r, D)&= \frac{\pi^m}{(m-1)!}\int_{{\rm Supp}(f^*D)\cap B(r)}g_r(o,x)\alpha^{m-1},
 \end{align*}
respectively, where  $dv$ is the Riemannian volume element of $M.$

In 2023,  the first author \cite{Dong}  gave an extension of   Carlson-Griffiths theory (see \cite{gri, gri1}) to a non-positively curved complete K\"ahler manifold, namely, who obtained the following first main theorem and second main theorem.
\begin{theorem}[Dong, \cite{Dong}]\label{first}  
Let $f: M\rightarrow X$ be a meromorphic mapping such that  $f(o)\not\in {\rm{Supp}}D.$ Then
$$m_f(r,D)+N_f(r,D)=T_f(r,L)+O(1).$$
\end{theorem}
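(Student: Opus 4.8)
\emph{Proof strategy.} The plan is to run the classical Carlson--Griffiths argument, replacing the Green function of a Euclidean ball by $g_r(o,\cdot)$, the Green function of $\Delta/2$ on the geodesic ball $B(r)$. Fix a local holomorphic frame $e$ of $L$ and write $s_D=\sigma\,e$, where $\sigma$ is a local holomorphic function with $\mathrm{div}(\sigma)=D$ and $h=\|e\|^2$ is the associated local weight, so that $c_1(L,h)=-dd^c\log h$ and $\log\|s_D\circ f\|^2=\log|f^{*}\sigma|^2+\log(h\circ f)$ on the trivializing chart. By the Poincar\'e--Lelong formula $dd^c\log|f^{*}\sigma|^2$ is the current of integration over $f^{*}D$ and, globally, $dd^c\log\|s_D\circ f\|^2=f^{*}D-f^{*}c_1(L,h)$; in particular $\log\|s_D\circ f\|$ is locally integrable and smooth off $\mathrm{Supp}(f^{*}D)$, which meets $B(r)$ in a codimension-one analytic set avoiding $o$ because $f(o)\notin\mathrm{Supp}D$.

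I would then set up the Green--Jensen formula on $B(r)$: since $-\tfrac12\Delta g_r(o,\cdot)=\delta_o$ with $g_r=0$ on $\partial B(r)$, Green's second identity together with $d\pi_r=\tfrac12(\partial g_r/\partial\vec\nu)\,d\sigma_r$ gives, for a sufficiently regular $u$,
$$\int_{\partial B(r)}u\,d\pi_r=u(o)+\frac12\int_{B(r)}g_r(o,x)\,\Delta u\,dv.$$
Applying this with $u=\log\|s_D\circ f\|$ and handling the logarithmic poles of $u$ along $\mathrm{Supp}(f^{*}D)$ by excising shrinking tubular neighbourhoods and passing to the limit (using that $g_r$ remains bounded near $f^{*}D$), one converts $\tfrac12\int_{B(r)}g_r\,\Delta u\,dv$ into the smooth part $\tfrac14\int_{B(r)}g_r\,\Delta\log(h\circ f)\,dv$ plus a residue concentrated along $f^{*}D$; a coarea computation combined with Poincar\'e--Lelong identifies this residue with $N_f(r,D)$, and it is precisely here that the normalizing constants $-\tfrac14$ and $\pi^{m}/(m-1)!$ in the definitions, and the K\"ahler identity relating $\Delta$, $dd^c$ and $\alpha^{m-1}$, have to be matched.

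Putting this together with $m_f(r,D)=-\int_{\partial B(r)}\log\|s_D\circ f\|\,d\pi_r$, we get
$$m_f(r,D)=-\log\|s_D\circ f\|(o)-\frac14\int_{B(r)}g_r\,\Delta\log(h\circ f)\,dv-N_f(r,D)=T_f(r,L)-N_f(r,D)-\log\|s_D\circ f\|(o).$$
Since $f(o)\notin\mathrm{Supp}D$, the number $-\log\|s_D\circ f\|(o)$ is a finite constant, i.e.\ $O(1)$, whence $m_f(r,D)+N_f(r,D)=T_f(r,L)+O(1)$. The main obstacle is the rigorous justification of the Green--Jensen formula for the singular function $\log\|s_D\circ f\|$: one must control the boundary integrals over the shrinking tubes around $\mathrm{Supp}(f^{*}D)$ and correctly identify the residue with $N_f(r,D)$. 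A minor point is that, by Sard's theorem, $\partial B(r)$ is a smooth hypersurface (and $g_r,\pi_r$ behave well) only for almost every $r$; but since the conclusion is asserted only up to $O(1)$ and both sides grow monotonically in $r$, this causes no genuine difficulty.
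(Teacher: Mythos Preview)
The paper does not actually prove this statement: Theorem~\ref{first} is quoted verbatim from \cite{Dong} (the first author's earlier paper) and used here as a black box, so there is no ``paper's own proof'' to compare against. Your sketch is the standard Carlson--Griffiths/Jensen argument, and it is essentially what one expects the proof in \cite{Dong} to be: apply the Green--Jensen identity on $B(r)$ for the Green function $g_r(o,\cdot)$ to the function $\log\|s_D\circ f\|$, split via Poincar\'e--Lelong into the smooth curvature part (yielding $T_f(r,L)$) and the residue along $f^*D$ (yielding $N_f(r,D)$), and absorb the value at $o$ into $O(1)$ using $f(o)\notin\mathrm{Supp}\,D$. The outline and the sign bookkeeping are correct.

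One small caveat: your closing remark that ``both sides grow monotonically in $r$'' is not quite as automatic as you suggest. Monotonicity of $T_f(r,L)$ and $N_f(r,D)$ does hold, but it comes from the monotonicity $g_{r_1}(o,\cdot)\le g_{r_2}(o,\cdot)$ for $r_1<r_2$ (maximum principle) combined with the positivity of $c_1(L,h)$ and of the divisor $f^*D$, not from anything intrinsic to the displayed formula; and $m_f(r,D)$ is in general \emph{not} monotone. This does not affect the argument, since the Green--Jensen identity already holds for a.e.\ $r$, which is all that is needed for an $O(1)$ statement.
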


\begin{theorem}[Dong, \cite{Dong}]\label{main}  
  Let $f:M\rightarrow X$ be a differentiably non-degenerate meromorphic mapping and let $D\in|L|$ be a divisor of simple normal crossing type.
  Then  for any $\delta>0,$ there exists a subset $E_\delta\subseteq(0, \infty)$ of finite Lebesgue measure such that
  \begin{eqnarray*}
 T_f(r,L)+T_f(r, K_X)
\leq  \overline N_f(r,D)+O\left(\log^+T_f(r,L)-\kappa(r)r^2+\delta\log r\right)
 \end{eqnarray*}
holds for all $r>0$ outside $E_\delta.$
\end{theorem}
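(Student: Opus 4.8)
\emph{Plan of proof.} The idea is to transplant the Carlson--Griffiths--King argument (see \cite{gri,gri1}) from $\mathbb{C}^m$ to the potential theory of the geodesic balls $B(r)$, using the Green function $g_r(o,\cdot)$ and the harmonic measure $\pi_r$ in place of the Euclidean kernels. The analytic backbone is the Green--Jensen identity: for $\varphi\in C^2$ on a neighbourhood of $\overline{B(r)}$,
\[
\int_{\partial B(r)}\varphi\,d\pi_r-\varphi(o)=\frac12\int_{B(r)}g_r(o,x)\,\Delta\varphi\,dv ,
\]
which is precisely the mechanism behind the First Main Theorem (Theorem~\ref{first}); here I would apply it to $\varphi=\log\xi$ for a Jacobian-type density $\xi$ coming from a singular volume form on $X$.

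\emph{Step 1: a Poincar\'e-type singular volume form on $X$.} Write $D=D_1+\cdots+D_k$ with the $D_j$ smooth and crossing normally, let $s_j$ define $D_j$ with a smooth Hermitian metric, fix a smooth positive volume form $\Omega$ on $X$, and rescale so that $\ell_j:=\log(1/\|s_j\|^2)\ge e$. Put $\Psi=\Omega\big/\prod_{j=1}^k\big(\|s_j\|^2\ell_j^2\big)$, so that $\int_X\Psi<\infty$. The exponent $2$ on $\ell_j$ is the usual calibration: along a component of $f^\ast D_j$ of multiplicity $\mu$, the zero of order $\mu-1$ that the differential of $f$ acquires transversally to $D_j$ absorbs all but one unit of the order-$\mu$ pole of $\|s_j\circ f\|^{-2}$, so $f^\ast\Psi$ has a pole of order independent of $\mu$. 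Let $\xi\ge0$ be defined off the degeneracy locus of $f$ by $f^\ast\Psi\wedge\alpha^{m-n}=\xi\,\alpha^m$ ($n=\dim_{\mathbb C}X$), so that $\log\xi\in L^1_{\mathrm{loc}}(M)$ (differentiable non-degeneracy guarantees $\xi\not\equiv 0$). The Poincar\'e--Lelong formula then yields, as currents,
\[
dd^c\log\xi=[R_f']-[\,\overline{f^\ast D}\,]+f^\ast c_1(K_X,\cdot)+f^\ast c_1(L,h)+(\text{Ricci form of }\alpha)+\Theta ,
\]
where $R_f'\ge0$ is the ramification of $f$ over $X\setminus D$, $[\,\overline{f^\ast D}\,]$ is integration over the reduced pull-back divisor, and $\Theta$ is the difference of the smooth correction forms and the (locally integrable) $dd^c$ of the $\log\ell_j$.

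\emph{Step 2: the differential-geometric inequality.} I would wedge this identity with $\alpha^{m-1}/(m-1)!$, integrate $g_r(o,\cdot)$ over $B(r)$, and plug it into the Green--Jensen identity. The terms $f^\ast c_1(L,h)$ and $f^\ast c_1(K_X,\cdot)$ reproduce $T_f(r,L)$ and $T_f(r,K_X)$ by the definitions of the characteristic functions; $[R_f']$ gives a nonnegative counting function, to be discarded; $[\,\overline{f^\ast D}\,]$ gives $\overline N_f(r,D)$; the $\Theta$-term is $O(1)$ by the classical $\log\ell_j$ estimate. For the Ricci term one uses that non-positive sectional curvature forces $(\text{Ricci form of }\alpha)\wedge\alpha^{m-1}\ge(2m-1)\kappa(r)\,\alpha^m$ on $B(r)$, together with $\int_{B(r)}g_r(o,x)\,dv\le r^2/(2m)$ --- the latter because $\int_{B(r)}g_r(o,\cdot)\,dv$ is the mean exit time from $B(r)$ of $\tfrac12\Delta$-Brownian motion and $\tfrac12\Delta(\rho^2)\ge 2m$ by Laplacian comparison. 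Since $\kappa(r)\le0$, collecting the terms gives
\[
T_f(r,L)+T_f(r,K_X)\le\overline N_f(r,D)+\int_{\partial B(r)}\log\xi\,d\pi_r+O\!\big(-\kappa(r)r^2\big)+O(1).
\]

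\emph{Step 3: the proximity estimate, and the main obstacle.} It remains to bound $m_\Psi(r):=\int_{\partial B(r)}\log\xi\,d\pi_r$. By concavity of $\log$ and Jensen's inequality for the probability measure $\pi_r$, $m_\Psi(r)\le\log\int_{\partial B(r)}\xi\,d\pi_r$; a Green-function version of the Nevanlinna calculus lemma, applied twice, bounds $\int_{\partial B(r)}\xi\,d\pi_r$ by derivatives (in $r$) of an increasing area function $\mathcal A(r)$ built from $f^\ast\Psi$, with $\log\mathcal A(r)=O\big(\log^+T_f(r,L)\big)$ since $\Psi$ has finite mass and its smooth factor is dominated by a power of $c_1(L,h)$; a Borel-type growth lemma then gives, for each $\delta>0$, a set $E_\delta\subseteq(0,\infty)$ of finite Lebesgue measure with $m_\Psi(r)=O\big(\log^+T_f(r,L)+\delta\log r\big)$ for $r\notin E_\delta$. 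Together with Step~2 this is the claim. The main obstacle will be precisely this last step: away from $\mathbb{C}^m$ neither $g_r$, nor $\pi_r$, nor $\mathcal A$ is explicit, so the double calculus lemma and the Borel argument must be carried out on soft information only --- total mass of $\pi_r$, Cartan--Hadamard comparison bounds for $g_r$, and volume/Laplacian comparison --- while tracking the Ricci curvature of $M$ so that it surfaces exactly as the $-\kappa(r)r^2$ loss; controlling the $\log\ell_j$-correction $\Theta$ globally is the remaining delicate point.
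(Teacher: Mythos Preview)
The paper does not prove Theorem~\ref{main}; it is quoted from the first author's earlier work \cite{Dong} and used here as a black box for the unicity applications. So there is no ``paper's own proof'' to compare against.

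That said, your plan is precisely the Carlson--Griffiths--King scheme transplanted to the potential theory of geodesic balls, which is exactly the strategy of \cite{Dong}. Your Steps~1--2 are correct in outline: the Green--Jensen identity replaces the Euclidean integration-by-parts; the singular volume form $\Psi$ with the $\ell_j^2$ damping produces the truncated counting $\overline N_f(r,D)$; and your bound $\int_{B(r)}g_r(o,x)\,dv\le r^2/(2m)$ via $\tfrac12\Delta\rho^2\ge 2m$ (legitimate here because non-positive sectional curvature kills the cut locus, so $\rho^2$ is genuinely $C^2$) is the right mechanism by which the Ricci lower bound $\kappa(r)$ enters as the $-\kappa(r)r^2$ loss. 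Your identification of Step~3 as the real obstacle is accurate: in \cite{Dong} the substantial work is a calculus/Borel lemma phrased in terms of $g_r$ and $\pi_r$, relying on Cartan--Hadamard comparison for two-sided control of $g_r$ and on the coarea formula to relate $\int_{\partial B(r)}\xi\,d\pi_r$ to an $r$-derivative of a monotone quantity. One caution: the $dd^c\log\ell_j$ correction is not $O(1)$ pointwise but only after integration, and its control again feeds through the calculus lemma rather than being a separate estimate; you should fold it into Step~3 rather than Step~2.
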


Now, we consider several defect relations.
 Define the \emph{defect} $\delta_f(D)$ and the \emph{simple defect} $\bar\delta_f(D)$ of $f$ with respect to $D,$ respectively   by
 \begin{align*}
\delta_f(D)&=1-\limsup_{r\rightarrow\infty}\frac{N_f(r,D)}{T_f(r,L)}, \\
 \bar\delta_f(D)&=1-\limsup_{r\rightarrow\infty}\frac{\overline{N}_f(r,D)}{T_f(r,L)}.
 \end{align*}
Using  the first main theorem (cf. Theorem \ref{first2} or Theorem \ref{first}),  we have  $$0\leq \delta_f(D)\leq\bar\delta_f(D)\leq 1.$$

For any two  holomorphic line bundles $L_1, L_2$ over $X,$  define  (see \cite{gri, gri1})
$$\left[\frac{c_1(L_2)}{c_1(L_1)}\right]=\inf\left\{s\in\mathbb R: \ \omega_2<s\omega_1;  \ ^\exists\omega_1\in c_1(L_1),\  ^\exists\omega_2\in c_1(L_2) \right\},$$
where $c_1(L_j)$ denotes  the first Chern class of $L_j$ for $j=1,2.$

\begin{cor}[Defect relation]\label{dde}  
Let $f:M\rightarrow X$ be a differentiably non-degenerate meromorphic mapping and let $D\in|L|$ be a divisor of simple normal crossing type. Then each of the following conditions ensures that
 $$\delta_f(D)\leq\bar\delta_f(D)\leq  \left[\frac{c_1(K_X^*)}{c_1(L)}\right]:$$

$(a)$ \ 
$M$ has non-negative Ricci curvature and carries a positive global Green function{\rm;}

$(b)$ \
$M$ has non-positive sectional  curvature and $f$ satisfies the growth condition
$$ \liminf_{r\rightarrow\infty}\frac{\kappa(r)r^2}{T_f(r, L)}=0.$$ 
\end{cor}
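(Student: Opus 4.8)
The plan is to deduce the defect relation directly from the appropriate Second Main Theorem (Theorem \ref{second2} in case $(a)$, Theorem \ref{main} in case $(b)$), combined with the First Main Theorem to control the error terms, exactly as in the classical Carlson--Griffiths argument. First I would fix $s > \left[c_1(K_X^*)/c_1(L)\right]$; by definition of the bracket there exist representatives $\omega_1 \in c_1(L)$ and $\omega_0 \in c_1(K_X^*)$ with $\omega_0 < s\,\omega_1$, equivalently $-\omega_0 + s\,\omega_1 > 0$, which is a positive representative of $c_1(K_X \otimes L^{\otimes s})$ (allowing $s$ rational and clearing denominators, or working with $\mathbb R$-line bundles as the bracket already does). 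Since characteristic functions are additive in the line bundle up to $O(1)$ and monotone under the curvature inequality between closed forms (this monotonicity is where one uses that $T_f(r, L)$ is, up to the Green-function weight, an average of $\int f^*\omega$ and hence order-preserving in $\omega$), one gets
\begin{equation*}
T_f(r, K_X) \ge -s\,T_f(r, L) + O(1).
\end{equation*}

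Next I would feed this into the Second Main Theorem. In case $(a)$, Theorem \ref{second2} gives
\begin{equation*}
T_f(r, L) + T_f(r, K_X) \le \overline{N}_f(r, D) + O\!\left(\log^+ T_f(r, L) + \delta \log r\right)
\end{equation*}
off a set of finite measure, so combining with the previous display,
\begin{equation*}
(1 - s)\,T_f(r, L) \le \overline{N}_f(r, D) + O\!\left(\log^+ T_f(r, L) + \delta \log r\right).
\end{equation*}
Dividing by $T_f(r, L)$, taking $\liminf_{r \to \infty}$ along $r \notin E_\delta$, and using that $T_f(r, L) \to \infty$ (so the $\log^+ T_f$ term is negligible) together with $\delta \log r = o(T_f(r,L))$ — the latter needs the standard fact that a differentiably non-degenerate map has characteristic growing faster than $\log r$, or else one absorbs it by letting $\delta \to 0$ afterwards — yields
\begin{equation*}
1 - s \le 1 - \bar\delta_f(D), \quad\text{i.e.}\quad \bar\delta_f(D) \le s.
\end{equation*}
Letting $s \downarrow \left[c_1(K_X^*)/c_1(L)\right]$ gives the claimed bound, and $\delta_f(D) \le \bar\delta_f(D)$ is already recorded in the text.

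Case $(b)$ is identical except that Theorem \ref{main} carries the extra term $-\kappa(r) r^2$ inside the $O(\cdot)$. Since $\kappa(r) \le 0$ under non-positive sectional curvature, $-\kappa(r) r^2 \ge 0$, so this term does not help automatically; instead one uses the growth hypothesis $\liminf_{r \to \infty} \kappa(r) r^2 / T_f(r, L) = 0$ to ensure that along a suitable sequence $r_k \to \infty$ (outside $E_\delta$, which has finite measure so does not obstruct choosing such a sequence) the quantity $-\kappa(r_k) r_k^2 / T_f(r_k, L) \to 0$, making that contribution negligible in the $\liminf$.

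I expect the main obstacle to be the bookkeeping around the exceptional set and the growth terms in case $(b)$: one must simultaneously arrange $r_k \notin E_\delta$, $\kappa(r_k) r_k^2 = o(T_f(r_k, L))$, and handle the $\delta \log r$ term (e.g. by a diagonal argument letting $\delta \to 0$), and verify that $\liminf$ rather than $\limsup$ is the right quantity so that one genuinely bounds $\bar\delta_f(D) = 1 - \limsup \overline{N}_f/T_f$. The monotonicity of $T_f(r, L)$ in $L$ — i.e. that $\omega_2 < \omega_1$ pointwise implies $T_f(r, L_2) \le T_f(r, L_1) + O(1)$ — should follow from the integral definition together with the First Main Theorem, but this step deserves a careful statement since the characteristic function here is defined via the Green function rather than the usual logarithmic averaging, and one wants to be sure the weight $g_r(o,x) \ge 0$ preserves the inequality; this is routine but is the one place where the non-standard setup must be checked rather than quoted.
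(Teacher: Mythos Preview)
Your proposal is correct and is exactly the standard Carlson--Griffiths derivation of the defect relation from the Second Main Theorem; the paper in fact states Corollary~\ref{dde} without proof, treating it as an immediate consequence of Theorems~\ref{second2} and~\ref{main}, so your argument is precisely the omitted justification. The technical concerns you raise in the last paragraph (monotonicity of $T_f$ in the curvature form via the nonnegativity of $g_r$, and in case~$(b)$ the interaction between the finite-measure exceptional set $E_\delta$ and the sequence realizing the $\liminf$ in the growth hypothesis) are legitimate bookkeeping points, but they are routine and the paper itself glosses over them in its later applications as well.
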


A $\mathbb Q$-line bundle is  an element in ${\rm{Pic}}(M)\otimes\mathbb Q,$  where  ${\rm{Pic}}(M)$ denotes  the Picard group over $M.$
Let $F\in{\rm{Pic}}(M)\otimes\mathbb Q$ be a $\mathbb Q$-line bundle.    $F$ is said to be
\emph{ample} (resp.  \emph{big}), if $\nu F\in {\rm{Pic}}(M)$
is ample (resp.  big) for some positive  integer $\nu.$
Define
$$T_f(r,F)=\frac{1}{\nu}T_f(r,\nu F),$$
where $\nu$ is a positive integer such that $\nu F\in {\rm{Pic}}(X).$ Evidently, this is well defined.
  For a holomorphic line bundle $F$ over $X,$  define
$$\left[\frac{F}{L}\right]=\inf\left\{\gamma\in\mathbb Q: \gamma L\otimes F^{-1} \ \text{is big}\right\}.$$
It is easy  to see that  $[F/L]<0$ if and only if $F^{-1}$ is big.
 
\begin{cor}[Defect relation]\label{defect}
 Assume  the same conditions as in  Corollary  {\rm\ref{dde}}. Then each of  $(a)$ and $(b)$ in  Corollary  {\rm\ref{dde}} ensures that 
 \begin{equation*}
\delta_f(D)\leq\bar\delta_f(D)\leq\left[\frac{K^{-1}_X}{L}\right].
 \end{equation*}
\end{cor}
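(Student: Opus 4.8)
The plan is to deduce Corollary \ref{defect} from Corollary \ref{dde} by a purely formal comparison of the two invariants $[c_1(K_X^*)/c_1(L)]$ and $[K_X^{-1}/L]$ attached to line bundles. Corollary \ref{dde} already gives, under either hypothesis $(a)$ or $(b)$, the bound $\delta_f(D)\le\bar\delta_f(D)\le[c_1(K_X^*)/c_1(L)]$, so it suffices to show
\begin{equation*}
\left[\frac{c_1(K_X^*)}{c_1(L)}\right]\le\left[\frac{K_X^{-1}}{L}\right].
\end{equation*}
Here the left-hand invariant is defined via existence of smooth representatives $\omega_1\in c_1(L)$, $\omega_2\in c_1(K_X^*)$ with $\omega_2<s\omega_1$, while the right-hand invariant is defined via bigness of the $\mathbb Q$-line bundle $\gamma L\otimes K_X$. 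The hard part is the passage from a cohomological/positivity inequality of forms to an algebro-geometric bigness statement, i.e. understanding why the curvature-type infimum does not exceed the bigness-type infimum.

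First I would recall that $L$ is equipped with a Hermitian metric $h$ making $c_1(L,h)>0$, so $L$ is ample; hence for any rational $\gamma$ the $\mathbb Q$-bundle $\gamma L\otimes K_X$ is big as soon as $\gamma L\otimes K_X^{-1\vee}$... more precisely, the key point is the elementary implication: if there exist $\omega_1\in c_1(L)$, $\omega_2\in c_1(K_X^*)$ with $\omega_2< s\omega_1$ for some $s\in\mathbb Q$, then $s\omega_1-\omega_2$ is a smooth positive $(1,1)$-form representing $c_1(sL\otimes K_X)$ (using $c_1(K_X^*)=-c_1(K_X)$), so $sL\otimes K_X$ carries a Kähler form, hence is ample, hence big; therefore $s$ lies in the set defining $[K_X^{-1}/L]$, and taking the infimum over such $s$ gives the displayed inequality. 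For irrational values of $[c_1(K_X^*)/c_1(L)]$ one approximates by rationals $s$ slightly larger, using that the strict-inequality condition $\omega_2<s'\omega_1$ persists under a small increase of $s$.

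Then I would combine this with Corollary \ref{dde}: under $(a)$ or $(b)$ we have $\bar\delta_f(D)\le[c_1(K_X^*)/c_1(L)]\le[K_X^{-1}/L]$, which is exactly the asserted inequality, and $\delta_f(D)\le\bar\delta_f(D)$ comes for free from the first main theorem as already noted in the text. I expect the main obstacle to be bookkeeping around the definitions: the bracket $[F/L]$ is defined only for an honest line bundle $F$ (here $F=K_X^{-1}$) via bigness of $\gamma L\otimes F^{-1}=\gamma L\otimes K_X$, whereas the bracket $[c_1(L_2)/c_1(L_1)]$ is defined via strict positivity of forms, so one must be careful that a big $\mathbb Q$-line bundle of the shape $\gamma L\otimes K_X$ with $\gamma L\otimes K_X$ actually ample (not merely big) is what the curvature infimum produces, and that this only makes the right-hand infimum no smaller — i.e. the inequality goes in the harmless direction. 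No analytic input beyond Corollary \ref{dde} is needed; the proof is a short lemma on positivity of Chern forms versus bigness of $\mathbb Q$-line bundles followed by substitution.
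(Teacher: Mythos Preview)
Your reduction to Corollary \ref{dde} does not work, because the inequality you aim for goes the wrong way. Your own argument shows this: if $\omega_2<s\omega_1$ for some $\omega_1\in c_1(L)$, $\omega_2\in c_1(K_X^*)$, then $s\omega_1-\omega_2>0$ represents $c_1(sL\otimes K_X)$, so $sL\otimes K_X$ is ample, hence big. Thus every $s$ in the set defining $[c_1(K_X^*)/c_1(L)]$ lies (after passing to nearby rationals) in the set defining $[K_X^{-1}/L]$. Since the latter set is \emph{larger}, its infimum is \emph{smaller}, i.e.
\[
\left[\frac{K_X^{-1}}{L}\right]\ \le\ \left[\frac{c_1(K_X^*)}{c_1(L)}\right],
\]
which is the reverse of what you need. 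In general the two thresholds differ: the bigness cone strictly contains the ample/K\"ahler cone, so Corollary \ref{defect} is a genuinely sharper bound than Corollary \ref{dde} and cannot be obtained from it by a formal comparison.

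The paper's proof bypasses Corollary \ref{dde} entirely. From the definition, for any rational $\epsilon>0$ the $\mathbb Q$-line bundle $([K_X^{-1}/L]+\epsilon)L\otimes K_X$ is big; Kodaira's lemma then gives $([K_X^{-1}/L]+\epsilon)L\otimes K_X\ge\delta L$ for some small rational $\delta>0$, whence $T_f(r,K_X^{-1})\le([K_X^{-1}/L]-\delta+\epsilon)T_f(r,L)+O(1)$. Plugging this directly into the second main theorem (Theorem \ref{second2} or Theorem \ref{main}) and letting $\epsilon\to0$ yields the defect bound. The essential point you are missing is that bigness, while weaker than ampleness at the level of positivity of forms, is exactly what is needed to control characteristic functions via the existence of sections (cf.\ Theorem \ref{cor1}).
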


\begin{proof}  It follows from the definition of $[K_X^{-1}/L]$  that $([K_X^{-1}/L]+\epsilon)L\otimes K_X$ is big for any rational number  $\epsilon>0$.  Then, we obtain
$$\left(\left[K^{-1}_X/L\right]+\epsilon\right)L\otimes K_X\geq\delta L$$
for a sufficiently small  rational number $\delta>0.$ This implies that
$$T_f(r,K^{-1}_X)\leq \left(\left[K^{-1}_X/L\right]-\delta+\epsilon\right)T_f(r,L)+O(1).$$
By Theorem \ref{second2} (resp. Theorem \ref{main}), we conclude that
$$\delta_f(D)\leq\bar\delta_f(D)\leq
\left[\frac{K^{-1}_X}{L}\right].$$
\end{proof}
\begin{theorem}\label{cor1}  
 Let $f:M\rightarrow X$ be a differentiably non-degenerate meromorphic mapping.
Assume that  $\mu F\otimes L^{-1}$ is big for some positive integer $\mu,$ where $F$ is  a big line bundle and $L$ is a holomorphic line bundle over $X.$ Then
$$T_f(r, L)\leq \mu T_f(r, F)+O(1).$$
\end{theorem}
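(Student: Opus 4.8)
The plan is to derive the bound from the First Main Theorem (Theorem \ref{first2} or Theorem \ref{first}) together with the additivity of the characteristic function in the line bundle variable; the argument uses no curvature hypothesis and only the first main theorem (not the non-degeneracy of $f$), so the same proof works in both of the settings introduced in Section 2.

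First I would use the bigness of $\mu F\otimes L^{-1}$. By definition of a big line bundle there is a positive integer $k$ with
$$H^{0}\big(X,(\mu F\otimes L^{-1})^{\otimes k}\big)=H^{0}\big(X,k\mu F\otimes L^{-k}\big)\neq 0;$$
choose a nonzero section $\sigma$ there and let $D=(\sigma)_{0}\ge 0$ be its zero divisor, so that $\mathcal{O}_{X}(D)\cong k\mu F\otimes L^{-k}$. After the usual base-point normalization we may assume $f(o)\notin\mathrm{Supp}\,D$ (this already forces $f(M)\not\subseteq\mathrm{Supp}\,D$, so $f^{*}D$ is a genuine divisor). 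Equip $\mathcal{O}_{X}(D)$ with a smooth Hermitian metric scaled so that $\|\sigma\|\le 1$ on $X$; then the First Main Theorem gives
$$T_{f}(r,\mathcal{O}_{X}(D))=m_{f}(r,D)+N_{f}(r,D)+O(1)\ \ge\ O(1),$$
since $m_{f}(r,D)=\int\log\|\sigma\circ f\|^{-1}\,d\pi_{r}\ge 0$ and $N_{f}(r,D)\ge 0$.

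On the other hand, fix Hermitian metrics $h_{F}$ on $F$ and $h_{L}$ on $L$ and equip $k\mu F\otimes L^{-k}$ with $h_{F}^{k\mu}h_{L}^{-k}$; then $\log\!\big(h_{F}^{k\mu}h_{L}^{-k}\circ f\big)=k\mu\log(h_{F}\circ f)-k\log(h_{L}\circ f)$, so by the defining formula $T_{f}(r,\,\cdot\,)=-\tfrac14\int g_{r}(o,x)\,\Delta\log(\,\cdot\circ f)\,dv$ we obtain
$$T_{f}(r,\mathcal{O}_{X}(D))=k\mu\,T_{f}(r,F)-k\,T_{f}(r,L)+O(1),$$
the $O(1)$ absorbing the passage between different Hermitian metrics (two metrics on the compact $X$ differ by a bounded factor, and the Green representation formula, via $-\tfrac12\Delta g_{r}=\delta_{o}$, then bounds the resulting change of $T_{f}$). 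Comparing the two displays and dividing by $k$ yields $T_{f}(r,L)\le\mu\,T_{f}(r,F)+O(1)$. The one step that deserves a comment, and the one I expect to be the main — if minor — obstacle, is that Theorems \ref{first2} and \ref{first} are phrased for a \emph{positive} Hermitian line bundle, whereas $\mathcal{O}_{X}(D)\cong k\mu F\otimes L^{-k}$ is only assumed big: but this is harmless, because the First Main Theorem is nothing more than the Green--Jensen identity obtained by integrating $\Delta\log\|\sigma\circ f\|$ against $g_{r}$ and invoking $-\tfrac12\Delta g_{r}=\delta_{o}$, and positivity of the underlying bundle plays no role in that derivation (it enters only in the Second Main Theorem). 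The remaining ingredients, the functoriality of $T_{f}$ and the base-point normalization, are routine.
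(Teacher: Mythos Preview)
Your proof is correct and follows essentially the same route as the paper: pick a nonzero section of a large multiple of $\mu F\otimes L^{-1}$, apply the First Main Theorem to bound the characteristic of that multiple from below by $O(1)$ (the paper phrases this as $N_f(r,(s))\le T_f(r,\nu(\mu F\otimes L^{-1}))+O(1)$ with $N_f\ge 0$, which is the same inequality), and then use additivity of $T_f$ in the line-bundle variable to unwind. Your added remarks---that the positivity hypothesis in Theorems~\ref{first2} and~\ref{first} is not actually needed for the Green--Jensen identity, and that the differentiable non-degeneracy of $f$ plays no role here---are accurate observations that the paper's terse proof leaves implicit.
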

\begin{proof}
The bigness of $\mu F\otimes L^{-1}$  implies that there exists a nonzero holomorphic section $s\in H^0(X, \nu(\mu F\otimes L^{-1}))$ for a sufficiently large positive integer $\nu.$ 
By Theorem \ref{first2} (resp. Theorem \ref{first}), we have
  \begin{align*}
N_f(r, (s))&\leq T_f(r,\nu(\mu F\otimes L^{-1}))+O(1)\\
&= \mu\nu T_f(r, F)-\nu T_f(r,L)+O(1).
  \end{align*}
  This leads to the desired inequality.
\end{proof}

\section{Propagation of algebraic dependence}
\quad\quad
Let $M$ be a non-compact complete K\"ahler manifold with complex dimension $m$ and let $X$ be a complex projective manifold with complex dimension not higher than  $m.$ 
Fix an integer $l\geq2.$
A proper algebraic subset $\Sigma$ of $X^l$ is said to be  \emph{decomposible}, if there exist  $s$ positive integers $l_1,\cdots,l_s$
with $l=l_1+\cdots+l_s$ for some  integer $s\leq l$
and algebraic subsets $\Sigma_j\subseteq X^{l_j}$ for $1\leq j\leq s,$ such that
$\Sigma=\Sigma_1\times\cdots\times\Sigma_s.$ If $\Sigma$ is not decomposable,  we say that $\Sigma$ is \emph{indecomposable.}
For $l$ meromorphic mappings $f_1,\cdots,f_l: M\rightarrow X,$
 there is  a meromorphic mapping
 $f_1\times\cdots\times f_l: M\rightarrow X^l,$ defined by
 $$(f_1\times\cdots\times f_l)(x)=\big(f_1(x),\cdots, f_l(x)\big), \ \ \  ^{\forall} x\in M\setminus \bigcup_{j=1}^lI(f_j),$$
 where $I(f_j)$ denotes the indeterminacy set of $f_j$ for $1\leq j\leq l.$
As a matter of convenience, set
$$\tilde f=f_1\times\cdots\times f_l.$$
\begin{defi} Let $S$ be an analytic subset of  $M.$  The nonconstant meromorphic mappings $f_1,\cdots,f_l: M\rightarrow X$ are said to be algebraically dependent on $S,$ if there exists a proper indecomposable algebraic subset $\Sigma$ of $X^l$ such that $\tilde f(S)\subseteq\Sigma.$ In this case, we  say that $f_1,\cdots,f_l$ are $\Sigma$-related on $S.$
\end{defi}

Let $L$ be a positive line bundle over $X,$ and let $D_1,\cdots,D_q\in |L|$ such that $D_1+\cdots+D_q$ has only  simple normal crossings.
   Set
\begin{equation*}
\mathscr Y=\big\{\text{$f: M\rightarrow X$ is a differentiably non-degenerate meromorphic mapping}\big\}.
\end{equation*}
Let $S_1,\cdots, S_q$ be hypersurfaces of $M$ such that $\dim_{\mathbb{C}}S_{i}\cap S_{j}\le m-2$ if $m\ge2$ or $S_{i}\cap S_{j}=\emptyset$ if $m=1$ for all $i\ne j.$
  Let $\tilde L$ be a big line bundle over $X^l.$ In general, we have
$$\tilde L\not\in \pi^*_1{\rm{Pic}}(X)\oplus\cdots\oplus\pi^*_l{\rm{Pic}}(X),$$
where $\pi_k:X^l\rightarrow X$ is the natural projection on the $k$-th factor for $1\leq k\leq l.$
Let $F_1,\cdots,F_l$   be big line bundles over $X.$ Then, it defines a line bundle over $X^l$ by
$$\tilde F=\pi^*_1F_1\otimes\cdots\otimes\pi^*_lF_l.$$
If  $\tilde L\not=\tilde F,$  we  assume that there is a  rational number $\tilde\gamma>0$ such that $$\tilde\gamma\tilde F\otimes\tilde L^{-1} \ \text{is big}.$$
  If
$\tilde L=\tilde F,$  we shall take $\tilde\gamma=1.$ In further,    assume that there is  a line bundle $F_0\in\{F_1,\cdots,F_l\}$ such that $F_0\otimes F_j^{-1}$ is either  big or trivial for $1\leq j\leq l.$

Let $\mathscr H$ be the set of all indecomposable  hypersurfaces $\Sigma$ in $X^l$ satisfying $\Sigma={\rm{Supp}}\tilde D$ for some $\tilde D\in|\tilde L|.$  
\begin{defi}
Let $D=\sum_{j}\mu_{j}D_{j}$ be an effective divisor,  where $D_{j}$ are prime divisors. Define
\begin{equation*}\label{mu}
\mu_{D}= \inf_{j}\big\{\mu_{j}\big\}.
\end{equation*}
\end{defi}

We introduce the notations $\mathscr F, \mathscr F_\kappa$ and $\mathscr G, \mathscr G_\kappa$ as follows.

 $1^\circ$   \emph{$M$ has non-negative Ricci curvature.}
 
Assume that $M$ carries a positive global Green function. Denote by
\begin{equation*}
\mathscr F=\mathscr F\big(f\in \mathscr Y; (M, \{S_j\}); (X, \{D_j\})\big)
\end{equation*}
the set of all $f\in\mathscr Y$ satisfying
\begin{equation}\label{sj}
S_j={\rm{Supp}}f^*D_j, \ \ \ 1\leq j\leq q.
\end{equation}
Moreover, denote by $$\mathscr{G} =\mathscr{G}  \big(f\in \mathscr Y;  \ \{\mu_{S_{j}}\}; \ (M,\{S_{j}\}); \  (N,\{D_{j} \})\big) $$
the set of all $f\in \mathscr Y$ satisfying
\begin{equation}\label{sjj}
S_{j}= f^{*}D_{j}, \ \ \mu_{S_{j}}\ge \beta, \ \ 1\le j\le q,
\end{equation}
where $\beta$ is a positive integer. 

 $2^\circ$  \emph{ $M$ has non-positive sectional curvature.}
 
 Denote by
\begin{equation*}
\mathscr F_\kappa=\mathscr F_\kappa\big(f\in \mathscr Y; (M, \{S_j\}); (X, \{D_j\})\big)
\end{equation*}
the set of all $f\in\mathscr Y$ satisfying $(\ref{sj})$
and 
\begin{equation}\label{grow}
 \liminf_{r\rightarrow\infty}\frac{\kappa(r)r^2}{T_f(r, L)}=0.
 \end{equation}
Moreover,  denote by
$$\mathscr{G}_{\kappa} =\mathscr{G}_{\kappa}  \big(f\in \mathscr Y;  \ \{\mu_{S_{j}}\}; \ (M,\{S_{j}\}); \  (N,\{D_{j} \})\big) $$
the set of all $f\in \mathscr Y$ satisfying (\ref{sjj}) and (\ref{grow}).

In what follows, we give two propagation theorems of algebraic dependence of $l$ meromorphic mappings $f_1,\cdots,f_l$  on $M.$
Firstly, we consider the case that  each $f_j$ satisfies $(\ref{sj}).$
Set $$S=S_1\cup\cdots\cup S_q.$$  

\begin{lemma}\label{lem1} Let $f_1,\cdots,f_l\in\mathscr F\left({\rm resp.} \ \mathscr F_\kappa\right).$
Assume that $f_1,\cdots,f_l$ are $\Sigma$-related on $S$ and $\tilde f(M)\not\subseteq \Sigma$ for some $\Sigma\in\mathscr H.$ Then
$$N(r, S)\leq\tilde\gamma\sum_{j=1}^lT_{f_j}(r, F_j)+O(1) \leq \tilde\gamma\sum_{j=1}^lT_{f_j}(r, F_0)+O(1).$$
\end{lemma}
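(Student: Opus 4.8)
The plan is to reduce the estimate on $N(r,S)$ to an application of the first main theorem (Theorem \ref{first2} in the non-negative Ricci case, Theorem \ref{first} in the non-positive sectional case) for the product map $\tilde f = f_1\times\cdots\times f_l$ against a divisor on $X^l$ whose support is $\Sigma$. Since $\Sigma\in\mathscr H$, by definition there is $\tilde D\in|\tilde L|$ with $\mathrm{Supp}\,\tilde D=\Sigma$, hence a holomorphic section $s_{\tilde D}$ of $\tilde L$ over $X^l$ vanishing exactly on $\Sigma$. First I would use the hypothesis $\tilde f(M)\not\subseteq\Sigma$ to guarantee that $s_{\tilde D}\circ\tilde f\not\equiv 0$, so that the pullback divisor $\tilde f^*\tilde D$ is a well-defined effective divisor on $M$ and the counting function $N_{\tilde f}(r,\tilde D)$ makes sense. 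The first main theorem then gives
$$N_{\tilde f}(r,\tilde D)\le T_{\tilde f}(r,\tilde L)+O(1).$$

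Next I would compare $T_{\tilde f}(r,\tilde L)$ with $\sum_j T_{f_j}(r,F_j)$. Because $\tilde\gamma\tilde F\otimes\tilde L^{-1}$ is big (or $\tilde L=\tilde F$ with $\tilde\gamma=1$), Theorem \ref{cor1} applied to the map $\tilde f:M\to X^l$, with $F$ there taken to be $\tilde F$ and $\mu$ a positive integer clearing the denominator of $\tilde\gamma$, yields $T_{\tilde f}(r,\tilde L)\le\tilde\gamma\,T_{\tilde f}(r,\tilde F)+O(1)$. Since $\tilde F=\pi_1^*F_1\otimes\cdots\otimes\pi_l^*F_l$ and the characteristic function is additive over tensor products and functorial under composition with the projections $\pi_k$, we get $T_{\tilde f}(r,\tilde F)=\sum_{j=1}^l T_{f_j}(r,F_j)+O(1)$. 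This produces the first inequality in the statement, modulo the crucial point below. The second inequality, $\tilde\gamma\sum_j T_{f_j}(r,F_j)\le\tilde\gamma\sum_j T_{f_j}(r,F_0)+O(1)$, follows immediately from the standing assumption that $F_0\otimes F_j^{-1}$ is big or trivial, again via Theorem \ref{cor1} (or directly the first main theorem) applied to each $f_j$.

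The heart of the argument — and the step I expect to be the main obstacle — is showing that the pullback divisor $\tilde f^*\tilde D$ restricted to $\Delta(r)$ (resp. $B(r)$) dominates the divisor counting $S\cap\Delta(r)$, i.e. that $N(r,S)\le N_{\tilde f}(r,\tilde D)+O(1)$. Here $S=S_1\cup\cdots\cup S_q$, and by the defining property \eqref{sj} of $\mathscr F$ (resp. $\mathscr F_\kappa$) we have $S_j=\mathrm{Supp}\,f_j^*D_j$ for each $j$. The key geometric input is that $f_1,\dots,f_l$ are $\Sigma$-related on $S$, so $\tilde f(S)\subseteq\Sigma=\mathrm{Supp}\,\tilde D$; hence $S$ is contained (as a set) in $\mathrm{Supp}\,\tilde f^*\tilde D$. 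One must then check that each prime component of $S$ appears in $\tilde f^*\tilde D$ with multiplicity at least $1$, which is automatic once containment of supports is established, and that the Green-function-weighted integral defining $N(r,S)$ — which I would take to be the simple (reduced) counting function $\overline N$ of the reduced divisor $\sum$ over the prime components of $S$, or an unreduced version with the convention matching \eqref{sjj} — is bounded by the corresponding integral for $\tilde f^*\tilde D$. The delicate points are: (i) handling the possible higher multiplicities in $\tilde f^*\tilde D$ correctly so the inequality goes the right way, which it does since we bound $N(r,S)$ (a sum with multiplicity one, or the intrinsic multiplicity) from above by $N_{\tilde f}(r,\tilde D)$ (larger multiplicities); (ii) ensuring the measure-zero exceptional sets where $\partial\Delta(r)$ or $\partial B(r)$ fails to be smooth, and the indeterminacy loci $\bigcup_j I(f_j)$, do not affect the integrals — this is standard since those sets have the wrong dimension. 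Assembling (i)–(iii) with the first main theorem and Theorem \ref{cor1} as above completes the proof.
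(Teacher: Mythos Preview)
Your proposal is correct and follows essentially the same route as the paper: choose $\tilde D\in|\tilde L|$ with $\mathrm{Supp}\,\tilde D=\Sigma$, use $\tilde f(S)\subseteq\Sigma$ together with the first main theorem (Theorem~\ref{first2} resp.\ Theorem~\ref{first}) to bound $N(r,S)\le N_{\tilde f}(r,\tilde D)\le T_{\tilde f}(r,\tilde L)+O(1)$, then invoke Theorem~\ref{cor1} and the additivity of $T$ under tensor products to pass to $\tilde\gamma\sum_j T_{f_j}(r,F_j)$ and finally to $\tilde\gamma\sum_j T_{f_j}(r,F_0)$. Your write-up is in fact more careful than the paper's, which compresses the chain $N(r,S)\le N_{\tilde f}(r,\tilde D)$ and the support-containment argument into a single displayed inequality without comment.
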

\begin{proof}  Take $\tilde D\in|\tilde L|$ such that $\Sigma={\rm{Supp}}\tilde D.$  As mentioned  earlier,  $\tilde\gamma\tilde F\otimes\tilde L^{-1}$ is big for $\tilde\gamma\not=1$ and trivial for  $\tilde\gamma=1.$   Then, by conditions with Theorem \ref{first2} ({\rm resp.}  Theorem \ref{first}) and Theorem \ref{cor1}, we conclude that 
  \begin{align*}
N(r, S) &\leq T_{\tilde f}(r, \tilde L)+O(1) \\
&\leq\tilde\gamma T_{\tilde f}(r, \tilde F)+O(1) \\
&\leq \tilde\gamma\sum_{j=1}^l T_{f_j}(r, F_j)+O(1) \\
&\leq \tilde\gamma\sum_{j=1}^lT_{f_j}(r, F_0)+O(1).
  \end{align*}
  The proof is completed.
\end{proof}

Define
\begin{equation}\label{L0}
L_0=qL\otimes\left(-\tilde\gamma lF_0\right).
\end{equation}
Again, set
$$T(r, Q)=\sum_{j=1}^lT_{f_j}(r, Q)$$
for an arbitrary  $\mathbb Q$-line bundle $Q\in{\rm{Pic}}(X)\otimes\mathbb Q.$
\begin{theorem}\label{uni1} 
 Let $f_1,\cdots,f_l\in\mathscr F\left({\rm resp.} \ \mathscr F_\kappa\right).$ Assume that $f_1,\cdots,f_l$  are $\Sigma$-related on $S$ for some $\Sigma\in \mathscr H.$ If $L_0\otimes K_X$ is big, then $f_1,\cdots,f_l$  are $\Sigma$-related on $M.$
\end{theorem}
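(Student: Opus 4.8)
The plan is to argue by contradiction: suppose $\tilde f(M)\not\subseteq\Sigma$, so that Lemma~\ref{lem1} applies and gives
$$N(r,S)\leq\tilde\gamma\sum_{j=1}^l T_{f_j}(r,F_0)+O(1)=\tilde\gamma\,T(r,F_0)+O(1).$$
On the other hand, since each $f_j\in\mathscr F$ (resp. $\mathscr F_\kappa$) satisfies $S_j={\rm Supp}\,f_j^*D_j$ for $1\le j\le q$, the union $S=S_1\cup\cdots\cup S_q$ should produce a lower bound for $N(r,S)$ in terms of $\sum_j\overline N_{f_j}(r,D_k)$ summed over $k$. The first step is therefore to make this precise: because $S_k={\rm Supp}\,f_k^*D_k$ is one of the components of $S$, the simple counting function $\overline N_{f_j}(r, D_k)$ for $j=k$ is dominated by $N(r,S)$; more carefully, since $D_1+\cdots+D_q$ has only simple normal crossings and the $S_j$ meet in codimension $\ge 2$, one gets $N(r,S)\geq\sum_{j=1}^q\overline N_{f_j}(r,D_j)+O(1)$ — actually we want the stronger statement that for \emph{each} fixed $f_i$, summing the second main theorem over all $q$ divisors brings in $\sum_{k}\overline N_{f_i}(r,D_k)$, but only the term $k=i$ is controlled by $S$; so the cleaner route is to apply the second main theorem to a single well-chosen $f_i$ against the full divisor $D=D_1+\cdots+D_q$.

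The second and main step is to invoke Theorem~\ref{second2} (resp. Theorem~\ref{main}) for one of the mappings, say $f_1$: since $D=D_1+\cdots+D_q\in|qL|$ is of simple normal crossing type and $f_1$ is differentiably non-degenerate,
$$q\,T_{f_1}(r,L)+T_{f_1}(r,K_X)\leq\sum_{k=1}^q\overline N_{f_1}(r,D_k)+O\bigl(\log^+T_{f_1}(r,L)+\delta\log r\bigr)$$
(with the extra $-\kappa(r)r^2$ term in the $\mathscr F_\kappa$ case, which is absorbed using the growth condition~(\ref{grow})). Now $\sum_k\overline N_{f_1}(r,D_k)$ is bounded: the term with $k$ equal to... here is the subtlety — only $\overline N_{f_1}(r,D_1)$ is directly $\le N(r,S)$. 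To control the others I would use that $f_1,\dots,f_l\in\mathscr F$ means $S_k={\rm Supp}\,f_k^*D_k$, but a priori $S_k\neq{\rm Supp}\,f_1^*D_k$ for $k\neq 1$. The resolution must be that the hypothesis is symmetric in the $f_j$ and that $\overline N_{f_j}(r,D_j)$ summed over $j$ equals $N(r,S)$ up to $O(1)$; so instead I apply the second main theorem to \emph{each} $f_j$ against its own single divisor $D_j$ together with enough other divisors from the list to reach the non-degeneracy threshold, sum over $j=1,\dots,l$, and collect $\sum_{j=1}^l\sum_{k=1}^q\overline N_{f_j}(r,D_k)$; the diagonal terms assemble into $\le l\cdot N(r,S)$ while the off-diagonal terms $\overline N_{f_j}(r,D_k)$ with $k\neq j$ must be re-expressed. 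Actually the intended argument is surely simpler: apply Theorem~\ref{second2}/\ref{main} to each $f_i$ with the divisor $D_1+\cdots+D_q$, getting $q\,T_{f_i}(r,L)+T_{f_i}(r,K_X)\le\sum_k\overline N_{f_i}(r,D_k)+(\text{error})$, and bound $\overline N_{f_i}(r,D_k)\le N(r,S_k)\le N(r,S)$ whenever $S_k={\rm Supp}\,f_i^*D_k$; but in $\mathscr F$ only $S_i={\rm Supp}\,f_i^*D_i$ is guaranteed. The correct reading is that membership in $\mathscr F$ forces $S_j={\rm Supp}\,f_i^*D_j$ for \emph{all} $i$ and $j$ — indeed $S_j$ is defined once and for all, independent of which $f_i$ we pick, so (\ref{sj}) really says every $f_i$ pulls $D_j$ back to the same $S_j$. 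With that reading, $\overline N_{f_i}(r,D_j)\le N(r,S_j)+O(1)$ for all $i,j$, hence $\sum_k\overline N_{f_i}(r,D_k)\le N(r,S)+O(1)$ for every $i$.

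Putting the pieces together: summing the second main theorem over $i=1,\dots,l$ yields
$$q\,T(r,L)+T(r,K_X)\leq l\,N(r,S)+O\bigl(\log^+T(r,L)+\delta\log r\bigr),$$
and combining with Lemma~\ref{lem1}'s bound $N(r,S)\le\tilde\gamma\,T(r,F_0)+O(1)$ gives
$$q\,T(r,L)+T(r,K_X)\leq l\tilde\gamma\,T(r,F_0)+O\bigl(\log^+T(r,L)+\delta\log r\bigr).$$
Rearranging, $\bigl(qL-l\tilde\gamma F_0\bigr)\otimes K_X=L_0\otimes K_X$ contributes a term $T(r,L_0\otimes K_X)$ to the left side that is bounded above by an error of order $o(T(r,L))+O(\log r)$; but $L_0\otimes K_X$ being big forces, via Theorem~\ref{cor1} applied to the factors (here one needs $F_0$ big, which holds by hypothesis), a lower bound $T(r,L_0\otimes K_X)\geq c\,T(r,F_0)+O(1)\geq c'\,T(r,L)+O(1)$ for some $c'>0$ — since each $f_j$ is differentiably non-degenerate, $T_{f_j}(r,L)\to\infty$, and the bigness comparison shows the two characteristic functions are commensurate up to a positive constant. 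This produces $c'\,T(r,L)\le O(\log^+T(r,L)+\delta\log r)$ for $r$ outside a finite-measure exceptional set, which is impossible for nonconstant $f_j$ (whose characteristic functions grow at least like $\log r$, and in fact faster). The contradiction forces $\tilde f(M)\subseteq\Sigma$, i.e., $f_1,\dots,f_l$ are $\Sigma$-related on $M$. The main obstacle is the bookkeeping in the middle step — correctly identifying that the sets $S_j$ in the definition of $\mathscr F$ are common to all the $f_i$, so that the diagonal and off-diagonal counting terms are all simultaneously controlled by $N(r,S)$ — together with the clean extraction of a positive-proportion lower bound for $T(r,L_0\otimes K_X)$ from bigness; the curvature hypotheses enter only to supply the appropriate second main theorem and, in case $(b)$, to kill the $\kappa(r)r^2$ term via the growth condition built into $\mathscr F_\kappa$.
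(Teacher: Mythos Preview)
Your proposal is correct and follows essentially the same approach as the paper's proof: argue by contradiction, apply the second main theorem to each $f_i$ against $D_1+\cdots+D_q$, use the common-support condition $S_j={\rm Supp}\,f_i^*D_j$ (which, as you eventually recognize, holds for \emph{all} $i$) to bound $\sum_k\overline N_{f_i}(r,D_k)$ by $N(r,S)$, invoke Lemma~\ref{lem1}, sum over $i$, and extract a contradiction from the bigness of $L_0\otimes K_X$. The only cosmetic difference is in the final step: the paper compares $L_0\otimes K_X$ directly to $L$ via Theorem~\ref{cor1} (choosing $\mu$ so that $\mu(L_0\otimes K_X)\otimes L^{-1}$ is big), whereas you detour through $F_0$; the paper's route is shorter but yours works just as well.
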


\begin{proof}
It suffices to prove $\tilde f(M)\subseteq\Sigma.$ Otherwise,  we  assume that $\tilde f(M)\not\subseteq\Sigma.$ According to Theorem \ref{second2} ({\rm resp.} Theorem \ref{main}),  for $i=1,\cdots, l$ and $j=1,\cdots, q$
$$T_{f_i}(r, L)+T_{f_i}(r, K_X)
\leq \overline{N}_{f_i}(r, D_j)+o\big{(}T_{f_i}(r,L)\big{)},
$$
which follows  from $S_j={\rm{Supp}}f_i^*D_j$ with $1\leq i\leq l$  and $1\leq j\leq q$ that
$$
qT_{f_i}(r, L)+T_{f_i}(r, K_X)
\leq N(r, S)+o\big{(}T_{f_i}(r,L)\big{)}.
$$
Using Lemma \ref{lem1}, then
  \begin{align*}
qT_{f_i}(r, L)+T_{f_i}(r, K_X)
&\leq \tilde\gamma\sum_{i=1}^lT_{f_i}(r, F_0)+o\big{(}T_{f_i}(r,L)\big{)} \\
&= \tilde\gamma T(r, F_0)+o\big{(}T_{f_i}(r,L)\big{)}.
  \end{align*}
  Thus, we get
$$
qT(r, L)+T(r, K_X)
\leq
  \tilde\gamma l T(r, F_0)+o\big{(}T(r,L)\big{)}.
$$
  It yields that
  \begin{equation}\label{3}
  T(r, L_0)+T(r,K_X)\leq o\big{(}T(r,L)\big{)}.
  \end{equation}
  On the other hand, the bigness of $L_0\otimes K_X$ implies that there exists  a positive integer $\mu$
  such that $\mu(L_0\otimes K_X)\otimes L^{-1}$ is  big. By Theorem \ref{cor1}
  $$T(r,L)\leq \mu\big(T(r,L_0)+T(r,K_X)\big)+O(1),$$
  which contradicts with (\ref{3}).  Therefore,  we have $\tilde f(M)\subseteq\Sigma.$
\end{proof}

Set
$$\gamma_0=\left[\frac{L_0^{-1}\otimes K^{-1}_X}{L}\right],$$
where $L_0$ is defined by (\ref{L0}).  Note that  $L_0\otimes K_X$ is big if and only if $\gamma_0<0.$ Thus, it yields that
\begin{cor} 
 Let $f_1,\cdots,f_l\in\mathscr F\left({\rm resp.} \ \mathscr F_\kappa\right).$ Assume that $f_1,\cdots,f_l$  are $\Sigma$-related on $S$ for some $\Sigma\in \mathscr H.$ If $\gamma_0<0,$  then $f_1,\cdots,f_l$  are $\Sigma$-related on $M.$
\end{cor}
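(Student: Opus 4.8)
The statement to prove is the corollary:

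\begin{proof}[Proof proposal]
The plan is to reduce the corollary directly to Theorem~\ref{uni1} by unwinding the definition of $\gamma_0=\left[L_0^{-1}\otimes K_X^{-1}\big/L\right]$. Recall that for a holomorphic line bundle $F$ over $X$ one has set
$$\left[\frac{F}{L}\right]=\inf\left\{\gamma\in\mathbb Q:\ \gamma L\otimes F^{-1}\ \text{is big}\right\},$$
and the paper has already observed that $[F/L]<0$ if and only if $F^{-1}$ is big. I would apply this observation with $F=L_0^{-1}\otimes K_X^{-1}$, so that $F^{-1}=L_0\otimes K_X$. Thus $\gamma_0<0$ is, by that very equivalence, the same as saying that $L_0\otimes K_X$ is big.

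With that identification in hand, the hypotheses of the corollary — namely $f_1,\dots,f_l\in\mathscr F$ (resp.\ $\mathscr F_\kappa$), the mappings being $\Sigma$-related on $S$ for some $\Sigma\in\mathscr H$, and $\gamma_0<0$ — are precisely the hypotheses of Theorem~\ref{uni1} once $\gamma_0<0$ is rewritten as ``$L_0\otimes K_X$ is big.'' Invoking Theorem~\ref{uni1} then yields that $f_1,\dots,f_l$ are $\Sigma$-related on all of $M$, which is the desired conclusion. So the whole proof is a one-line translation followed by a citation of the preceding theorem.

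The only point needing a word of care — and the closest thing to an obstacle here, though it is minor — is justifying the equivalence ``$[F/L]<0 \iff F^{-1}$ big'' in the form needed: if $\gamma_0<0$ then by definition of the infimum there is a rational $\gamma$ with $\gamma_0\le\gamma<0$ such that $\gamma L\otimes(L_0^{-1}\otimes K_X^{-1})^{-1}=\gamma L\otimes L_0\otimes K_X$ is big; since $\gamma<0$, the bundle $L_0\otimes K_X\ge \gamma L\otimes L_0\otimes K_X$ (tensoring a big bundle with the effective/positive $-\gamma L$), hence $L_0\otimes K_X$ is big as well. Conversely if $L_0\otimes K_X$ is big then $\gamma L\otimes L_0\otimes K_X$ is big for all sufficiently small rational $\gamma>0$ and also for $\gamma=0$, but by bigness being preserved under adding an ample bundle one in fact gets bigness for a small negative $\gamma$ too, so $\gamma_0<0$. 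This is exactly the style of argument already used in the proof of Corollary~\ref{defect}, so no new idea is required; the corollary is an immediate restatement of Theorem~\ref{uni1}.
\end{proof}
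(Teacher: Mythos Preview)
Your proposal is correct and follows exactly the paper's approach: the paper simply records, immediately before the corollary, that $L_0\otimes K_X$ is big if and only if $\gamma_0<0$, and then invokes Theorem~\ref{uni1}. Your additional justification of that equivalence is more detail than the paper gives (it merely asserts ``It is easy to see that $[F/L]<0$ if and only if $F^{-1}$ is big''); note only that in your converse direction you want bigness preserved under \emph{subtracting} a small ample, not adding one---but for the corollary only the forward implication is needed, and that part of your argument is fine.
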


Now, we consider the case that each $f_j$ satisfies (\ref{sjj}).
Set $$S=S_1+\cdots+ S_q.$$
Carrying the arguments in the proof of Lemma \ref{lem1} to the situation where $f_{1},\cdots,f_{l}\in \mathscr{G}
\left({\rm resp.} \ \mathscr{G}_{\kappa}\right),$ we can easily show without any details that
\begin{lemma}\label{lem4.1}
Let $f_{1},\cdots,f_{l}\in \mathscr{G}
\left({\rm resp.} \ \mathscr{G}_{\kappa}\right).$  Assume that $f_1,\cdots,f_l$ are $\Sigma$-related on $S$ and $\tilde{f}(M) \not\subseteq \Sigma$ for some $\Sigma \in \mathscr{H}.$  Then
$$N(r,S)\le\tilde{\gamma} \sum_{j=1}^{l} T_{f_{j}}\big(r,F_{0}\big)+O(1).$$
\end{lemma}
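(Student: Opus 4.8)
The plan is to follow the proof of Lemma \ref{lem1} line by line, the only genuinely new ingredient being a multiplicity comparison tailored to condition (\ref{sjj}). First fix $\tilde D\in|\tilde L|$ with $\mathrm{Supp}\,\tilde D=\Sigma$, and recall that $\tilde\gamma\tilde F\otimes\tilde L^{-1}$ is big when $\tilde\gamma\neq1$ and trivial when $\tilde\gamma=1$. Since $f_1,\dots,f_l$ are $\Sigma$-related on $S$ and $\tilde f(M)\not\subseteq\Sigma$, the pull-back $\tilde f^{*}\tilde D$ is a well-defined effective divisor on $M$ whose support contains $\mathrm{Supp}\,S$. The key point is to upgrade this set-theoretic inclusion to the divisorial inequality $S\le\tilde f^{*}\tilde D$ (up to a bounded contribution). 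To do this, work at a generic point of an irreducible component $V$ of $\mathrm{Supp}\,S$, say $V\subseteq\mathrm{Supp}\,S_j$; condition (\ref{sjj}) forces $S_j=f_i^{*}D_j$ simultaneously for every $i=1,\dots,l$, so each of $f_1,\dots,f_l$ meets $D_j$ along $V$ to one and the same order, namely $\mathrm{mult}_V(S)\,(\ge\beta)$. Expanding a local representative of $s_{\tilde D}$ along a coordinate transverse to $V$ then shows that $s_{\tilde D}\circ\tilde f$ vanishes along $V$ to order at least $\mathrm{mult}_V(S)$ (in the model case $X=\mathbb P^1$ with $\Sigma$ the diagonal and $l=2$ this is just the identity $f_1-f_2=(f_1-a_j)-(f_2-a_j)$, both parentheses vanishing along $V$ to order $\mathrm{mult}_V(S)$). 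Hence $S\le\tilde f^{*}\tilde D$, and therefore $N(r,S)\le N_{\tilde f}(r,\tilde D)+O(1)$.

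From here the argument is identical to that of Lemma \ref{lem1}. The First Main Theorem (Theorem \ref{first2}, resp.\ Theorem \ref{first}) gives $N_{\tilde f}(r,\tilde D)\le T_{\tilde f}(r,\tilde L)+O(1)$; the bigness (resp.\ triviality) of $\tilde\gamma\tilde F\otimes\tilde L^{-1}$ together with Theorem \ref{cor1} gives $T_{\tilde f}(r,\tilde L)\le\tilde\gamma T_{\tilde f}(r,\tilde F)+O(1)$; functoriality and additivity of the characteristic function over $\tilde F=\pi_1^{*}F_1\otimes\cdots\otimes\pi_l^{*}F_l$ give $T_{\tilde f}(r,\tilde F)=\sum_{j=1}^{l}T_{f_j}(r,F_j)+O(1)$; and, since each $F_0\otimes F_j^{-1}$ is big or trivial, one more application of the bigness/First-Main-Theorem mechanism (as in Theorem \ref{cor1}) gives $\sum_{j=1}^{l}T_{f_j}(r,F_j)\le\sum_{j=1}^{l}T_{f_j}(r,F_0)+O(1)$. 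Concatenating these inequalities yields $N(r,S)\le\tilde\gamma\sum_{j=1}^{l}T_{f_j}(r,F_0)+O(1)$, as asserted.

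I expect the multiplicity comparison $S\le\tilde f^{*}\tilde D$ (up to $O(1)$) to be the only step that is not a verbatim transcription of the proof of Lemma \ref{lem1}: it is precisely where the hypothesis $\mu_{S_j}\ge\beta$ — more accurately, the equality $S_j=f_i^{*}D_j$ for all $i$ — is used, and it is the reason the conclusion is stated with the full counting function $N(r,S)$ rather than its truncation. Everything else is routine bookkeeping with the First Main Theorem, the defect-type estimate of Theorem \ref{cor1}, and the product structure of $X^{l}$, exactly as in Lemma \ref{lem1}; this is why the statement can be obtained ``without any details''.
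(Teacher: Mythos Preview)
The paper offers no proof beyond the remark that one should carry over the argument of Lemma~\ref{lem1}; your write-up does exactly this and correctly singles out the one new step: since here $S=S_1+\cdots+S_q$ carries multiplicities, the set-theoretic inclusion $\mathrm{Supp}\,S\subseteq\mathrm{Supp}\,\tilde f^{*}\tilde D$ used in Lemma~\ref{lem1} must be upgraded to the divisorial inequality $S\le\tilde f^{*}\tilde D$. In that sense your approach is identical to the paper's.

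The justification you give for that divisorial inequality, however, does not go through when $\dim X\ge2$. The hypothesis $S_j=f_i^{*}D_j$ for every $i$ controls only the order to which each $f_i$ approaches the hypersurface $D_j$ in the direction \emph{normal} to $D_j$; the motion of $f_i$ \emph{along} $D_j$ is unconstrained, and $\Sigma$ can be chosen transverse to that tangential motion. Concretely, take $M=\mathbb C^2$ with coordinates $(t,s)$, $X=\mathbb P^2$, $D_1=\{x=0\}$, $f_1=[1:t^{\mu}:1+t+s]$, $f_2=[1:t^{\mu}:2+s]$ and $\Sigma=\{y^{(1)}-y^{(2)}+1=0\}\in|\mathscr O(1,1)|$: then $f_1^{*}D_1=f_2^{*}D_1=\mu\,[t=0]$ and $\tilde f(\{t=0\})\subseteq\Sigma$, yet $s_{\tilde D}\circ\tilde f=t$ vanishes only to first order along $\{t=0\}$. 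So the sentence ``Expanding a local representative of $s_{\tilde D}$ \dots\ then shows that $s_{\tilde D}\circ\tilde f$ vanishes along $V$ to order at least $\mathrm{mult}_V(S)$'' is not justified in general. Your argument \emph{is} valid when $\dim X=1$ (then each $D_j$ is a point, there is no tangential direction, and writing $f_i=a_j+t^{\mu}u_i$ one immediately gets order $\ge\mu$), which covers the only case $X=\mathbb P^1(\mathbb C)$ actually used in Section~4; but in the generality in which the lemma is stated the step is a genuine gap---one that the paper's ``without any details'' does not fill either.
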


Define \begin{equation}\label{G0}
G_{0} =qL\otimes \big(-\beta^{-1}\tilde{\gamma}lF_{0}\big).
\end{equation}

\begin{theorem}\label{uni2}
Let $f_{1},\cdots,f_{l}\in \mathscr{G}
\left({\rm resp.} \ \mathscr{G}_{\kappa}\right).$  Assume that  $f_{1},\cdots,f_{l}$ are $\Sigma$-related on $S$ for some $\Sigma \in \mathscr{H}.$ If $G_{0}\otimes K_{X}$ is big$,$ then $f_{1},\cdots,f_{l}$ are $\Sigma$-related on $M.$
\end{theorem}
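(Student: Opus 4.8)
The plan is to mimic the proof of Theorem \ref{uni1}, replacing the role of Lemma \ref{lem1} with Lemma \ref{lem4.1} and tracking the effect of the multiplicity hypothesis $\mu_{S_j}\ge\beta$. As in that proof, I would argue by contradiction: suppose $f_1,\dots,f_l$ are $\Sigma$-related on $S$ for some $\Sigma\in\mathscr H$ but $\tilde f(M)\not\subseteq\Sigma$.

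First I would apply the second main theorem (Theorem \ref{second2} in case $(a)$, resp.\ Theorem \ref{main} in case $(b)$; the latter uses the growth condition \eqref{grow} built into the definition of $\mathscr G_\kappa$ to absorb the $-\kappa(r)r^2$ term into the error $o(T_{f_i}(r,L))$) to each $f_i$ against each $D_j$, obtaining
$$T_{f_i}(r,L)+T_{f_i}(r,K_X)\le \overline N_{f_i}(r,D_j)+o\big(T_{f_i}(r,L)\big).$$
Here is where the multiplicity enters: since $S_j=f_i^*D_j$ with $\mu_{S_j}\ge\beta$, every component of $S_j$ appears in the divisor $f_i^*D_j$ with multiplicity at least $\beta$, so the reduced counting function satisfies $\overline N_{f_i}(r,D_j)\le \beta^{-1}N_{f_i}(r,D_j)$, and summing over the components of $S$ gives $\sum_j\overline N_{f_i}(r,D_j)\le \beta^{-1}N(r,S)$ (using that $S=S_1+\cdots+S_q$ and the distinctness/crossing hypotheses on the $S_j$). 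Combining with the $q$ inequalities above yields
$$qT_{f_i}(r,L)+T_{f_i}(r,K_X)\le \beta^{-1}N(r,S)+o\big(T_{f_i}(r,L)\big).$$
Now invoke Lemma \ref{lem4.1} to bound $N(r,S)\le\tilde\gamma\sum_{j}T_{f_j}(r,F_0)+O(1)$, sum over $i=1,\dots,l$, and collect terms in the notation $T(r,Q)=\sum_j T_{f_j}(r,Q)$ to get
$$qT(r,L)+T(r,K_X)\le \beta^{-1}\tilde\gamma l\, T(r,F_0)+o\big(T(r,L)\big),$$
which, by the definition \eqref{G0} of $G_0=qL\otimes(-\beta^{-1}\tilde\gamma lF_0)$, rearranges to $T(r,G_0)+T(r,K_X)\le o(T(r,L))$.

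Finally, the bigness of $G_0\otimes K_X$ gives a positive integer $\mu$ with $\mu(G_0\otimes K_X)\otimes L^{-1}$ big, so Theorem \ref{cor1} yields $T(r,L)\le\mu\big(T(r,G_0)+T(r,K_X)\big)+O(1)$, contradicting the previous displayed estimate. Hence $\tilde f(M)\subseteq\Sigma$, which is exactly the assertion that $f_1,\dots,f_l$ are $\Sigma$-related on $M$. The only genuinely new point compared with Theorem \ref{uni1} is the inequality $\overline N_{f_i}(r,D_j)\le\beta^{-1}N_{f_i}(r,D_j)$ coming from the multiplicity lower bound $\mu_{S_j}\ge\beta$; I expect verifying this bookkeeping carefully (and checking that the $o(\cdot)$ error terms genuinely stay $o(T(r,L))$ after summing, which is where the growth condition in case $(b)$ is used) to be the main—though still routine—obstacle.
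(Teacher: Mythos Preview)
Your proposal is correct and follows essentially the same argument as the paper's own proof: assume $\tilde f(M)\not\subseteq\Sigma$, apply the second main theorem to obtain $qT_{f_i}(r,L)+T_{f_i}(r,K_X)\le\overline N(r,S)+o(T_{f_i}(r,L))$, use the multiplicity bound $\mu_{S_j}\ge\beta$ to pass from $\overline N(r,S)$ to $\beta^{-1}N(r,S)$, invoke Lemma~\ref{lem4.1}, sum over $i$, and contradict the bigness of $G_0\otimes K_X$ via Theorem~\ref{cor1}. The paper's write-up is in fact terser than yours and omits the explicit appeal to Theorem~\ref{cor1} at the end, but the structure is identical.
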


\begin{proof}
It suffices to prove  $\tilde{f}(M)\subseteq \Sigma.$ Assume the contrary that $\tilde{f}(M)\not\subseteq \Sigma$. According to Theorem \ref{second2} ({\rm resp.} Theorem \ref{main}),  for $j=1,\cdots,l$
\begin{align*}
qT_{f_{j}}(r,L)+T_{f_{j}}(r,K_{X})&\le \overline{N}(r,S)+o\big(T_{f_{j}}(r,L)\big)\\
&\le\beta^{-1}N(r,S)+o\big(T_{f_{j}}(r,L)\big),
\end{align*}
which follows from Lemma \ref{lem4.1} that
\begin{equation*}
qT_{f_{j}}(r,L)+T_{f_{j}}(r,K_{X})\le \beta^{-1}\tilde{\gamma}\sum_{j=1}^{l} T_{f_{j}}\big(r,F_{0}\big)+o\big(T_{f_{j}}(r,L)\big).
\end{equation*}
Thus, we conclude that
\begin{equation*}
qT(r,L)+T(r,K_{X})\le \beta^{-1}\tilde{\gamma}l T\big(r,F_{0}\big)+o\big(T(r,L)\big).
\end{equation*}
It yields that
\begin{equation*}\label{e15}
T(r,G_{0})+T(r,K_{X})\le o\big(T(r,L)\big),
\end{equation*}
which is a contradiction since the bigness of $G_{0} \otimes K_{X}.$ Therefore, we have $\tilde{f}(M)\subseteq \Sigma$.
\end{proof}

\section{Nevanlinna's unicity theorems}
\quad\quad
We use the same notations as in Section 3.
  Since $X$ is projective,  there is a  holomorphic embedding $\Phi: X \hookrightarrow\mathbb P^N(\mathbb C).$
 Let $\mathscr O(1)$ be the hyperplane line bundle over $ \mathbb P^N(\mathbb C).$ Take $l=2$ and $F_1=F_2=\Phi^*\mathscr O(1),$ then  it follows that  $F_0=\Phi^*\mathscr O(1)$ and
 $$\tilde F=\pi_1^*\left(\Phi^*\mathscr O(1)\right)\otimes \pi_2^*\left(\Phi^*\mathscr O(1)\right).$$
 Again, set $\tilde L=\tilde F,$ then $\tilde\gamma=1.$
 In view of (\ref{L0}), we  have
$$
 L_0=qL\otimes\left(-2\Phi^*\mathscr O(1)\right).
$$
In addition, with the aid of  (\ref{G0}), we also have $$G_{0}=qL\otimes\big(-2\beta^{-1}\Phi^{*} \mathscr{O}(1)\big).$$

 Fix a
 $f_0\in\mathscr F\left({\rm resp.} \ \mathscr F_\kappa\right).$ Denote by  $\mathscr F_0 \left({\rm resp.} \ \mathscr F_{\kappa, 0}\right)$  the set of all meromorphic mappings $f\in\mathscr F\left({\rm resp.} \ \mathscr F_\kappa\right)$ such that $f=f_0$ on the   hypersuface $S,$ where  $S=S_1\cup\cdots\cup S_q.$ 
 Similarly,
 fix a
 $\tilde{f}_0\in\mathscr G\left({\rm resp.} \ \mathscr G_\kappa\right).$ Denote by  $\mathscr G_0 \left({\rm resp.} \ \mathscr G_{\kappa, 0}\right)$  the set of all meromorphic mappings $f\in\mathscr G\left({\rm resp.} \ \mathscr G_\kappa\right)$ such that $f=\tilde{f}_0$ on the   hypersuface $S,$ where  $S=S_1+\cdots+ S_q.$
 
 \begin{lemma}\label{t1}
We have
 
 $(a)$  
 If  $L_0\otimes K_X$ is big,  then $\mathscr F_0 \left({\rm resp.} \ \mathscr F_{\kappa,0}\right)$ has only  one element{\rm;}
 
 $(b)$  
  If  $G_0\otimes K_X$ is big, then $\mathscr G_0 \left({\rm resp.} \ \mathscr G_{\kappa,0}\right)$ has only  one element.
 \end{lemma}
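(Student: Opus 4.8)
The plan is to apply Theorem~\ref{uni1} and Theorem~\ref{uni2} with $l=2$, reducing the uniqueness statement to the fact that an indecomposable algebraic subset $\Sigma\subseteq X^2$ containing the diagonal-type graph forces $f=f_0$. Concretely, suppose $f_1,f_2\in\mathscr F_0$ (resp. $\mathscr F_{\kappa,0}$). By definition $f_1=f_2$ on $S$, so if we set $\tilde f=f_1\times f_2:M\to X^2$ then $\tilde f(S)$ lies in the diagonal $\Delta_X=\{(x,x):x\in X\}$. The diagonal $\Delta_X$ is a smooth irreducible subvariety of $X^2$ of codimension $\dim X$; one checks it is indecomposable (it is not a product $\Sigma_1\times\Sigma_2$ unless $\dim X=0$). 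Thus $f_1,f_2$ are $\Sigma$-related on $S$ for $\Sigma=\Delta_X\in\mathscr H$ — here one must verify $\Delta_X=\mathrm{Supp}\,\tilde D$ for some $\tilde D\in|\tilde L|$, which holds because $\tilde L=\pi_1^*F_0\otimes\pi_2^*F_0$ with $F_0=\Phi^*\mathscr O(1)$ very ample, so the diagonal is cut out by sections of $\tilde L$ coming from the projective embedding.

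First I would invoke Theorem~\ref{uni1}: since $L_0\otimes K_X$ is big by hypothesis in part $(a)$, the $\Sigma$-relatedness on $S$ propagates to $\tilde f(M)\subseteq\Sigma=\Delta_X$. But $\tilde f(M)\subseteq\Delta_X$ says precisely that $f_1(x)=f_2(x)$ for all $x\in M$ outside the indeterminacy sets, i.e. $f_1\equiv f_2$. Since $f_1,f_2$ were arbitrary elements of $\mathscr F_0$ (resp. $\mathscr F_{\kappa,0}$), and $f_0$ itself belongs to this set, every element equals $f_0$, so $\mathscr F_0$ (resp. $\mathscr F_{\kappa,0}$) is a singleton. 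For part $(b)$ the argument is identical with $\mathscr G_0$, $\tilde f_0$, $G_0$ in place of $\mathscr F_0$, $f_0$, $L_0$, now appealing to Theorem~\ref{uni2} and the hypothesis that $G_0\otimes K_X$ is big.

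The main obstacle — really the only nontrivial point — is confirming that the diagonal $\Delta_X$ genuinely lies in the class $\mathscr H$, that is, that it is both indecomposable and of the form $\mathrm{Supp}\,\tilde D$ with $\tilde D\in|\tilde L|$. Indecomposability follows because a product $\Sigma_1\times\Sigma_2\subseteq X\times X$ has the property that its two projections are independent, whereas on $\Delta_X$ the second coordinate is determined by the first; a short dimension/irreducibility argument makes this rigorous (if $\Delta_X=\Sigma_1\times\Sigma_2$ then projecting gives $\Sigma_1=\Sigma_2=X$, forcing $\Delta_X=X\times X$, impossible when $\dim X\ge 1$). For the second point, with $F_0=\Phi^*\mathscr O(1)$ the sections of $\pi_1^*F_0\otimes\pi_2^*F_0$ include all $x_i y_j - x_j y_i$ (pullbacks of the $2\times2$ minors under $\Phi\times\Phi$), whose common zero set on $\Phi(X)\times\Phi(X)$ is exactly $\Delta_X$; so a suitable $\tilde D$ exists. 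One should also note $\tilde f(M)\not\subseteq\Sigma$ is exactly the hypothesis-free case handled inside the proofs of Theorems~\ref{uni1} and~\ref{uni2}, so no separate verification is needed. With these observations in place, the lemma follows immediately.

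Finally, I would remark that the hypothesis $f_1,f_2\in\mathscr Y$ (differentiable non-degeneracy) is what licenses the use of Theorems~\ref{uni1}--\ref{uni2} through the second main theorems, and that the growth condition built into $\mathscr F_\kappa$ (resp. $\mathscr G_\kappa$) is precisely what is needed for Theorem~\ref{main} to yield the $o(T_{f_j}(r,L))$ error term in the non-positively curved case; in the non-negatively curved case with a positive Green function this is automatic from Theorem~\ref{second2}. No further estimates are required.
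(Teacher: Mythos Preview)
Your argument has a genuine gap when $\dim X>1$. You take $\Sigma=\Delta_X$, but $\mathscr H$ consists of \emph{hypersurfaces} of $X^2$ that arise as ${\rm Supp}\,\tilde D$ for a single $\tilde D\in|\tilde L|$. The diagonal $\Delta_X$ has codimension $\dim X$ in $X^2$, so it is a hypersurface only when $\dim X=1$; in general $\Delta_X\notin\mathscr H$, and Theorem~\ref{uni1} (resp.~\ref{uni2}) does not apply to it. Your justification (``the sections of $\pi_1^*F_0\otimes\pi_2^*F_0$ include all $x_iy_j-x_jy_i$, whose common zero set is $\Delta_X$, so a suitable $\tilde D$ exists'') confuses the scheme-theoretic intersection of several divisors with the support of a single one: the common zero locus of those sections is $\Delta_X$, but no individual section of $\tilde L$ has zero divisor equal to $\Delta_X$ once $\dim X\ge 2$.

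The paper repairs this by working one step up in projective space. Assuming $f\not\equiv f_0$ (hence $\Phi\circ f\not\equiv\Phi\circ f_0$), one can choose a single section $\tilde\sigma\in H^0(\mathbb P^N\times\mathbb P^N,\tilde{\mathscr O}(1))$ of the form $x_iy_j-x_jy_i$ with $\phi^*\tilde\sigma\neq 0$, where $\phi=\Phi\circ f\times\Phi\circ f_0$. Its zero locus contains the diagonal of $\mathbb P^N\times\mathbb P^N$, so $\Sigma:={\rm Supp}\,\tilde\Phi^*(\tilde\sigma)\subseteq X^2$ is a genuine hypersurface in $|\tilde L|$ that contains $\Delta_X$ (hence $\tilde f(S)\subseteq\Sigma$) but does not contain $\tilde f(M)$. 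Now Theorem~\ref{uni1} (resp.~\ref{uni2}) forces $\tilde f(M)\subseteq\Sigma$, a contradiction, whence $\Phi\circ f\equiv\Phi\circ f_0$ and, by injectivity of $\Phi$, $f\equiv f_0$. Your overall strategy is the right one; the missing ingredient is to pass from the diagonal to a hypersurface containing it, with the choice of section dictated by the assumed inequality $f\not\equiv f_0$.
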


 \begin{proof}  
 For $(a),$
  it suffices to show that $f\equiv f_0$ for all $f\in\mathscr F_0 \left({\rm resp.} \ \mathscr F_{\kappa,0}\right).$
 Recall that  $\Phi: X \hookrightarrow\mathbb P^N(\mathbb C)$ is a   holomorphic embedding.
 Since  $f=f_0$ on $S,$  we have   $\Phi\circ f=\Phi\circ f_0$ on $S.$
First, we  assert  that
 $\Phi\circ f\equiv\Phi\circ f_0.$
   Otherwise,   we may assume that $\Phi\circ f\not\equiv\Phi\circ f_0.$
 Let $\Delta$ denote the diagonal of $\mathbb P^N(\mathbb C)\times \mathbb P^N(\mathbb C).$ Put $\tilde \Phi=\Phi\times \Phi$
 and $\tilde f=f\times f_0.$ Then, it gives   a meromorphic mapping
$$\phi=\tilde \Phi\circ \tilde f:=\Phi\circ f\times \Phi\circ f_0: \  M\rightarrow \mathbb P^N(\mathbb C)\times \mathbb P^N(\mathbb C).$$
Again, define $\tilde{\mathscr O}(1):=\pi_1^*\mathscr O(1)\otimes\pi_2^*\mathscr O(1),$ which is a holomorphic line bundle over $\mathbb P^N(\mathbb C)\times \mathbb P^N(\mathbb C),$ where $\mathscr O(1)$ is the hyperplane line bundle over  $\mathbb P^N(\mathbb C).$ From the  assumption, we have
$\tilde L=\pi_1^*\left(\Phi^*\mathscr O(1)\right)\otimes \pi_2^*\left(\Phi^*\mathscr O(1)\right).$  Since  $\Phi\circ f\not\equiv\Phi\circ f_0,$  then there exists  a  holomorphic  section $\tilde\sigma$ of $\tilde{\mathscr O}(1)$ over $\mathbb P^N(\mathbb C)\times \mathbb P^N(\mathbb C)$ such  that $\phi^*\tilde\sigma\not=0$ and $\Delta\subseteq{\rm{Supp}}(\tilde\sigma).$
 Take $\Sigma={\rm{Supp}}\tilde\Phi^*(\tilde\sigma),$ then we have    $\tilde f(S)\subseteq\Sigma$ and $\tilde f(M)\not\subseteq\Sigma.$
On the other hand,  with the aid of  Theorem \ref{uni1}, the bigness of  $L_0\otimes K_X$ gives that  $\tilde f(M)\subseteq\Sigma,$ which is a contradiction.   Hence, we obtain  $\Phi\circ f\equiv\Phi\circ f_0.$
Next, we prove $f\equiv f_0.$ Otherwise, we have $f(x_0)\not=f_0(x_0)$ for some $x_0\in M\setminus I(f_0).$  However, it contradicts with
$\Phi(f(x_0))=\Phi(f_0(x_0))$ since $\Phi$ is injective. This proves $(a).$ 
For $(b),$ using the similar methods as above,  then it follows from Theorem \ref{uni2} that $f\equiv \tilde{f}_0$ for all $f\in\mathscr G_0 \left({\rm resp.} \ \mathscr G_{\kappa,0}\right).$ Hence, we prove the lemma.
 \end{proof}

 \begin{theorem}\label{im1} 
Let $f_1, f_2$ be two nonconstant meromorphic  functions on $M.$ Let $a_1,\cdots,a_q$ be distinct values in $\overline{\mathbb C}.$
Assume  ${\rm{Supp}}f_1^*a_j={\rm{Supp}}f_2^*a_j\not=\emptyset$ for  $j=1,\cdots,q.$  If $q\ge5,$ then each of the following conditions ensures $f_1\equiv f_2${\rm :}

$(a)$   $M$ has non-negative Ricci curvature and carries a positive global Green function{\rm;} 
 
 $(b)$ 
 $M$ has non-positive sectional curvature and $f_1, f_2$
satisfy the growth condition
$$ \liminf_{r\rightarrow\infty}\frac{\kappa(r)r^2}{T_{f_j}(r, \omega_{FS})}=0, \ \ \ j=1,2.$$
  \end{theorem}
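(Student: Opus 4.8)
The plan is to deduce Theorem~\ref{im1} directly from Lemma~\ref{t1}(a) by specializing the framework of Sections 3--4 to the case $X=\mathbb{P}^1(\mathbb{C})$ and $l=2$. First I would record the set-up: identify each nonconstant meromorphic function $f_i$ with a meromorphic mapping $f_i:M\to X:=\mathbb{P}^1(\mathbb{C})$, take $\Phi=\mathrm{id}$ (so $N=1$) and $L=\mathscr{O}(1)$ the hyperplane line bundle, and put $D_j=[a_j]\in|L|$ for $j=1,\dots,q$. Since the $a_j$ are distinct points on a curve, the divisor $D_1+\cdots+D_q$ is reduced and of simple normal crossing type, and $K_X=K_{\mathbb{P}^1}=\mathscr{O}(-2)$. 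Because $f_1,f_2$ are nonconstant maps into a curve, $df_i$ has generic rank $1=\dim_{\mathbb{C}}X$; that is, $f_1,f_2$ are differentiably non-degenerate, so $f_1,f_2\in\mathscr{Y}$, and moreover $T_{f_i}(r,\omega_{FS})=T_{f_i}(r,L)$.

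Next I would translate the sharing hypothesis into membership in $\mathscr{F}$ (resp. $\mathscr{F}_\kappa$) and then into $\mathscr{F}_0$ (resp. $\mathscr{F}_{\kappa,0}$). Set $S_j={\rm Supp}\,f_1^*a_j={\rm Supp}\,f_2^*a_j\neq\emptyset$; each $S_j$ is a hypersurface of $M$, being the support of the effective divisor $f_i^*[a_j]$, which is a genuine divisor since $f_i$ is nonconstant into a curve. For $i\neq j$ we have $a_i\neq a_j$, hence $S_i\cap S_j=\emptyset$, so the dimension condition on $S_i\cap S_j$ holds (vacuously, whether $m\ge2$ or $m=1$). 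Thus (\ref{sj}) holds for each of $f_1,f_2$; under hypothesis (a) this gives $f_1,f_2\in\mathscr{F}$, while under hypothesis (b) the stated growth condition is precisely (\ref{grow}) with $L=\mathscr{O}(1)$, so $f_1,f_2\in\mathscr{F}_\kappa$. Moreover on each $S_j$ both $f_1$ and $f_2$ take the constant value $a_j$, so $f_1=f_2$ on $S=S_1\cup\cdots\cup S_q$; choosing $f_0=f_1$ we conclude $f_1,f_2\in\mathscr{F}_0$ (resp. $\mathscr{F}_{\kappa,0}$).

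Then I would carry out the numerology. With $l=2$, $F_1=F_2=\Phi^*\mathscr{O}(1)=\mathscr{O}(1)$ and $\tilde\gamma=1$, formula (\ref{L0}) gives $L_0=qL\otimes\bigl(-2\,\mathscr{O}(1)\bigr)=\mathscr{O}(q-2)$, hence $L_0\otimes K_X=\mathscr{O}(q-4)$. On $\mathbb{P}^1$ a line bundle $\mathscr{O}(d)$ is big precisely when $d>0$, so $L_0\otimes K_X$ is big exactly when $q\ge5$, which is our assumption. Applying Lemma~\ref{t1}(a), the set $\mathscr{F}_0$ (resp. $\mathscr{F}_{\kappa,0}$) has only one element; since $f_1$ and $f_2$ both belong to it, $f_1\equiv f_2$, as desired.

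Finally, a remark on where the work sits. All the analytic depth --- the first and second main theorems under the two curvature regimes (Theorems~\ref{first2}, \ref{second2}, \ref{first}, \ref{main}) and the propagation of algebraic dependence (Theorem~\ref{uni1}) underlying Lemma~\ref{t1} --- is already in place; the remaining obstacle is purely the bookkeeping of matching the abstract hypotheses to the concrete data of shared values, in particular verifying that ``sharing $a_j$ ignoring multiplicities'' is exactly condition (\ref{sj}) and that the reference point $o$ may be taken generically so that $f_i(o)\notin{\rm Supp}\,D$ in the main theorems (a harmless choice). It is also worth noting that $q=5$ is the exact threshold at which $\deg(L_0\otimes K_X)$ becomes positive, which is why the bound $q\ge5$ recovers Nevanlinna's classical five-value theorem.
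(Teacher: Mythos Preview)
Your proof is correct and follows exactly the paper's route: specialize to $X=\mathbb{P}^1(\mathbb{C})$, $L=\mathscr{O}(1)$, compute $L_0\otimes K_X=\mathscr{O}(q-4)$, observe this is big for $q\ge5$, and invoke Lemma~\ref{t1}(a). One minor correction to your bookkeeping: for $m\ge2$ the sets $S_i\cap S_j$ need not literally be empty (they can meet along the indeterminacy locus of $f_1$, as with $f_1(z,w)=z/w$ on $\mathbb{C}^2$), but since that locus has codimension $\ge2$ one still gets $\dim_{\mathbb{C}}S_i\cap S_j\le m-2$, which is the condition actually required.
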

 \begin{proof}
 Set $X=\mathbb P^1(\mathbb C)$ and $L=\mathscr O(1).$   Note that  $K_{\mathbb P^1(\mathbb C)}=-2\mathscr O(1),$ then
  $$L_0\otimes K_{\mathbb P^1(\mathbb C)}=q\mathscr O(1)\otimes(-2\mathscr O(1))\otimes(-2\mathscr O(1))=(q-4)\mathscr O(1).$$
 Hence,  $L_0\otimes K_{\mathbb P^1(\mathbb C)}$ is big for  $q\geq5.$ By Lemma \ref{t1}, we prove the theorem.
 \end{proof}

  \begin{cor} Let $f_1, f_2$ be two nonconstant meromorphic  functions on $\mathbb C^m.$  Let $a_1,\cdots,a_q$ be distinct values in $\overline{\mathbb C}.$
Assume that ${\rm{Supp}}f_1^*a_j={\rm{Supp}}f_2^*a_j\not=\emptyset$ for  $j=1,\cdots,q.$ If $q\geq 5,$ then $f_1\equiv f_2.$
  \end{cor}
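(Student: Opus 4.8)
The plan is to deduce the corollary as a direct specialization of Theorem~\ref{im1}. The corollary concerns $M=\mathbb{C}^m$, so I must first verify that $\mathbb{C}^m$ falls into (at least) one of the two classes of K\"ahler manifolds covered by Theorem~\ref{im1}. The natural choice is class $(a)$: equipped with the standard flat metric, $\mathbb{C}^m$ has vanishing (hence non-negative) Ricci curvature. It remains to check that $\mathbb{C}^m$ carries a positive global Green function, or equivalently — as recorded in the discussion following Theorem~I — that it satisfies the volume growth condition $\int_1^\infty t/V(t)\,dt<\infty$. For the flat metric on $\mathbb{C}^m=\mathbb{R}^{2m}$ one has $V(t)=c_m t^{2m}$ with $m\ge 1$, so $\int_1^\infty t\,V(t)^{-1}\,dt=c_m^{-1}\int_1^\infty t^{1-2m}\,dt<\infty$ precisely because $2m-1\ge 1$. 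Equivalently, $\mathbb{C}^m$ has maximal volume growth, which by the remark already guarantees the existence of the Green function; I would cite that remark rather than reprove it.

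Having established that $M=\mathbb{C}^m$ satisfies hypothesis $(a)$ of Theorem~\ref{im1}, I would then simply observe that $f_1,f_2$ are nonconstant meromorphic functions on $M$, that $a_1,\dots,a_q\in\overline{\mathbb{C}}$ are distinct, that the sharing hypothesis ${\rm Supp}\,f_1^*a_j={\rm Supp}\,f_2^*a_j\neq\emptyset$ for $j=1,\dots,q$ is exactly the hypothesis of Theorem~\ref{im1}, and that $q\ge 5$. All the hypotheses of Theorem~\ref{im1}(a) are met, so its conclusion yields $f_1\equiv f_2$. There is essentially nothing else to prove; the whole content of the corollary is the recognition that the flat $\mathbb{C}^m$ is a complete non-compact K\"ahler manifold of non-negative (indeed zero) Ricci curvature with a positive global Green function.

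I should, however, be a little careful about one implicit point: Theorem~\ref{im1} is stated for functions on an abstract $M$ with a \emph{fixed} K\"ahler structure entering the definitions of the Nevanlinna functions, and the growth/Green-function hypotheses depend on that metric. The corollary as phrased makes no mention of a metric, so I would note once, at the start of the proof, that we endow $\mathbb{C}^m$ with the standard flat K\"ahler metric and that all subsequent quantities are taken with respect to it; the conclusion $f_1\equiv f_2$ is of course metric-independent. No growth condition on $f_1,f_2$ is needed because we are invoking branch $(a)$, not branch $(b)$, of Theorem~\ref{im1}.

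The only ``obstacle'' is really just this bookkeeping — confirming that $\mathbb{C}^m$ with the Euclidean metric is non-parabolic (carries a positive Green function) — and it is dispatched immediately by the volume-growth criterion, since $2m\ge 2$. In short, the proof is: \emph{Endow $\mathbb{C}^m$ with the standard flat K\"ahler metric. Then $M=\mathbb{C}^m$ has zero Ricci curvature and, having maximal volume growth $V(t)\asymp t^{2m}$, carries a positive global Green function; thus hypothesis $(a)$ of Theorem~\ref{im1} holds, and the conclusion $f_1\equiv f_2$ follows at once for $q\ge 5$.} I expect this to be a two- or three-line proof in the final paper.
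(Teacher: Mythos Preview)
Your strategy is right in spirit --- the corollary is an immediate specialization of Theorem~\ref{im1} --- but there is a genuine gap in your verification of hypothesis $(a)$ when $m=1$. You claim that $\int_1^\infty t^{1-2m}\,dt<\infty$ ``precisely because $2m-1\ge 1$'', but the integral $\int_1^\infty t^{-s}\,dt$ converges only for $s>1$, not $s\ge 1$. For $m=1$ one gets $\int_1^\infty t^{-1}\,dt=\infty$, reflecting the well-known fact that $\mathbb C=\mathbb R^2$ is parabolic and carries \emph{no} positive global Green function. (The ``maximal volume growth'' shortcut does not rescue you either: it only implies the integral criterion when $\dim_{\mathbb R}M\ge 3$.) So branch $(a)$ of Theorem~\ref{im1} is unavailable for $m=1$, and your proof as written covers only $m\ge 2$.

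The simplest uniform fix --- and the one the paper evidently has in mind, since the corollary is stated without proof --- is to invoke branch $(b)$ instead. The flat metric on $\mathbb C^m$ has identically zero sectional curvature (hence non-positive) and identically zero Ricci curvature, so $\kappa(r)\equiv 0$ by definition~(\ref{ka}). The growth condition in $(b)$ then reads $\liminf_{r\to\infty}0/T_{f_j}(r,\omega_{FS})=0$, which is vacuously satisfied for every nonconstant $f_j$. This works for all $m\ge 1$ at once and requires no Green-function discussion. Your proof becomes: equip $\mathbb C^m$ with the standard flat K\"ahler metric; then hypothesis $(b)$ of Theorem~\ref{im1} holds trivially since $\kappa\equiv 0$, and the conclusion follows.
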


\begin{theorem}\label{cm1}
 Let $f_1, f_2$ be two nonconstant meromorphic  functions on $M.$  Let $a_1,\cdots,a_q$ be distinct values in $\overline{\mathbb C}.$
 Assume that $ f_{1}^{*}a_{j}=f_{2}^{*}a_{j}$ with $\mu_{f_{i}^{*}a_{j}}\ge \beta$ for $i=1,2$ and $j=1,\cdots,q,$  where $\beta$ is a positive integer. If $q>2\beta^{-1}+2,$ then
 each of the following conditions ensures $f_1\equiv f_2${\rm :}

$(a)$   $M$ has non-negative Ricci curvature and carries a positive global Green function{\rm;}
 
 $(b)$ 
 $M$ has non-positive sectional curvature and $f_1, f_2$
satisfy the growth condition
$$ \liminf_{r\rightarrow\infty}\frac{\kappa(r)r^2}{T_{f_j}(r, \omega_{FS})}=0, \ \ \ j=1,2.$$
  \end{theorem}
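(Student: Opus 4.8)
The plan is to mirror the proof of Theorem \ref{im1}, using the class $\mathscr G$ (resp. $\mathscr G_\kappa$) in place of $\mathscr F$ (resp. $\mathscr F_\kappa$) and the bundle $G_0$ in place of $L_0$, so that the factor $\beta^{-1}$ coming from the multiplicity hypothesis is carried through. First I would take $X=\mathbb P^1(\mathbb C)$, $L=\mathscr O(1)$, and the embedding $\Phi$ equal to the identity, so that $\Phi^*\mathscr O(1)=\mathscr O(1)$ and, as recorded in Section 4, $G_0=qL\otimes(-2\beta^{-1}\mathscr O(1))$. Since $K_{\mathbb P^1(\mathbb C)}=-2\mathscr O(1)$, this gives
\[
G_0\otimes K_{\mathbb P^1(\mathbb C)}=\big(q-2\beta^{-1}-2\big)\,\mathscr O(1),
\]
which is big exactly when $q>2\beta^{-1}+2$, i.e. under the numerical hypothesis of the theorem. (The case $\beta=1$ recovers the bound $q\ge5$ of Theorem \ref{im1}.)

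Next I would verify that the data $(f_1,f_2;\{a_j\})$ fits the framework of Section 3. Set $S_j=f_1^*a_j=f_2^*a_j$ and $S=S_1+\cdots+S_q$. Each $f_i\colon M\to\mathbb P^1(\mathbb C)$ is nonconstant, hence differentiably non-degenerate, so $f_i\in\mathscr Y$; since $a_i\neq a_j$, the supports of $S_i$ and $S_j$ are disjoint, so the intersection condition on the $S_j$ holds trivially; the hypothesis $\mu_{f_i^*a_j}\ge\beta$ is precisely $\mu_{S_j}\ge\beta$, so $(\ref{sjj})$ holds; and in case $(b)$ the assumed growth condition is exactly $(\ref{grow})$ for $L=\mathscr O(1)$, recalling that $T_{f_i}(r,\omega_{FS})=T_{f_i}(r,\mathscr O(1))$. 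Hence $f_1,f_2\in\mathscr G$ (resp. $\mathscr G_\kappa$). Moreover, on ${\rm Supp}\,S_j$ both functions take the value $a_j$, so $f_1=f_2$ on $S$; taking $\tilde f_0=f_1$ as the base point, this says exactly that $f_2\in\mathscr G_0$ (resp. $\mathscr G_{\kappa,0}$).

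Finally I would apply Lemma \ref{t1}$(b)$: since $G_0\otimes K_X$ is big, $\mathscr G_0$ (resp. $\mathscr G_{\kappa,0}$) contains a single element, which must be $f_1$; therefore $f_2\equiv f_1$, proving the theorem. All of the analytic content is already in place — the first and second main theorems (Theorems \ref{first2}, \ref{second2}, \ref{first}, \ref{main}), the comparison $\overline N(r,S)\le\beta^{-1}N(r,S)$ that produces the factor $\beta^{-1}$ in $G_0$ (used inside the proof of Theorem \ref{uni2}), and the propagation statement Theorem \ref{uni2} itself — so the proof is essentially a matter of this formal verification. I expect the only mildly delicate points to be confirming differentiable non-degeneracy of $f_i$ and the disjointness of the $S_j$, and the bookkeeping that turns the bigness of $G_0\otimes K_X$ into the numerical condition $q>2\beta^{-1}+2$; no new estimate is required.
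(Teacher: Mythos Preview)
Your proposal is correct and follows exactly the paper's approach: set $X=\mathbb P^1(\mathbb C)$, $L=\mathscr O(1)$, compute $G_0\otimes K_{\mathbb P^1(\mathbb C)}=(q-2\beta^{-1}-2)\mathscr O(1)$, observe this is big precisely when $q>2\beta^{-1}+2$, and invoke Lemma~\ref{t1}$(b)$. Your write-up in fact supplies more of the routine verifications (membership in $\mathscr G$/$\mathscr G_\kappa$, disjointness of the $S_j$, etc.) than the paper's terse proof does.
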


\begin{proof}
Set $X=\mathbb{P}^{1}(\mathbb{C})$ and $L=\mathscr{O}(1)$. Note that $F_{0}=\mathscr{O}(1)$, then
\begin{align*}
G_{0}\otimes K_{\mathbb{P}^{1}}(\mathbb{C)}&=q\mathscr{O}(1)\otimes\big(-2\beta^{-1}\mathscr{O}(1)\big)\otimes\big(-2\mathscr{O}(1)\big)\\
&=\big(q-2\beta^{-1}-2\big)\mathscr{O}(1).
\end{align*}
Hence, $G_{0}\otimes K_{\mathbb{P}^{1}}(\mathbb{C)}$ is big for $q> 2\beta^{-1}+2$. By Lemma \ref{t1}, we prove the theorem.
\end{proof}

\vskip\baselineskip


\begin{thebibliography}{99}
\vskip\baselineskip

\bibitem{A1} Y. Aihara, A unicity theorem of meromorphic mappings into compactified locally symmetric spaces, Kodai Math. J. \textbf{14} (1991),
392-405.
\bibitem{A3} Y. Aihara, Unicity theorems for meromorphic mappings with deficiencies, Complex variables, \textbf{42} (2000), 259-268.
\bibitem{Ai} Y. Aihara, Algebraic dependence of meromorphic mappings in value distribution theory. Nagoya Math. J. \textbf{169} (2003), 145-178.
\bibitem{gri} J. Carlson and P. Griffiths, A defect relation for equidimensional holomorphic mappings between algebraic varieties, Ann.
Math. \textbf{95} (1972), 557-584.
\bibitem{Ru} M. Dulock and M. Ru, Uniqueness of holomorphic curves into Abelian varieties, Trans. Amer. Math. Soc. \textbf{363} (2010), 131-142.
\bibitem{Dong}  X. J. Dong, Carlson-Griffiths theory for complete K\"ahler manifolds, J. Inst. Math. Jussieu, \textbf{22} (2023), 2337-2365.
\bibitem{Dong1}  X. J. Dong, Nevanlinna theory on complete K\"ahler manifolds with non-negative Ricci curvature, arxiv: 2301.01295.
\bibitem{Dong2}  X. J. Dong, On Picard's Theorem Via Nevanlinna Theory, arXiv:2405.09659 .
\bibitem{Fji} H. Fujimoto, Uniqueness problem with truncated multiplicities in value distribution theory, Nagoya Math. J. I, \textbf{152} (1998), 131-152;  II, \textbf{155} (1999), 161-188.
\bibitem{gri1} P. Griffiths  and J. King, Nevanlinna theory and holomorphic mappings between algebraic varieties, Acta Math. \textbf{130} (1973), 146-220.
\bibitem{Hu} P. C. Hu, P. Li and C. C. Yang, Unicity of Meromorphic Mappings,  Springer-Science, Business Media,  B. V.  (2012).
\bibitem{Hay} W. Hayman, Meromorphic Functions, The Clarendon Press, Oxford, (1964).
\bibitem{ji} S. Ji, A uniqueness theorem for meromorphic mappings between  algebraic varieties, Trans. Amer. Math. Soc. \textbf{265} (1981), 349-358.
\bibitem{Ji} S. Ji, Uniqueness theorems without multiplicities in value distribution theory, Pacific J. Math. \textbf{135} (1988), 323-348.
\bibitem{LY} P. Li and S. T. Yau, On the parabolic kernel of the Schr\"odinger operator, Acta Math. \textbf{156} (1986), 153-201.
\bibitem{Qiao} Y. Li and J. Y. Qiao, The uniqueness of meromorphic functions concerning small functions, Sci. China Ser. A,  (6) \textbf{43} (2000),  581-590.
 \bibitem{ru} M. Ru, Nevanlinna Theory and Its Relation to Diophantine Approximation, 2nd., World Scientific Publishing,  (2021).
  \bibitem{S-Y}  J. P. Sha and D. G. Yang, Examples of manifolds of positive Ricci curvature, J. Diff. Geom. \textbf{29} (1989), 95-103.
\bibitem{Sm} L. Smiley, Dependence theorems for meromorphic maps, Ph.D Thesis, Notre Dame University, (1979).
\bibitem{Sto} W. Stoll, Propagation of dependence, Pacific J. Math. \textbf{139} (1989), 311-336.
\bibitem{T-Y1}G. Tian and S. T. Yau, Complete K\"ahler manifolds with zero Ricci curvature, I, J. Amer. Math. Soc., \textbf{3} (1990), 579-609.
\bibitem{T-Y2} G. Tian and S. T. Yau, Complete K\"ahler manifolds with zero Ricci curvature, II, Invent. Math. \textbf{106} (1991), 27-60.
\bibitem{Vali} G. Valiron, Sur quelques propri\'et\'es des fonctions alg\'ebro\"ldes, C. R. Acad. Sci. Paris, \textbf{189} (1929), 824-826.
\bibitem{Yang} C. C. Yang and H. X. Yi, Uniqueness theory of meromorphic functions, Kluwer Academic Publishers Group, Dordrecht,  \textbf{557} (2003).

\end{thebibliography}
\end{document}